\numberwithin{equation}{section}
\newcommand{\dvg}{{\rm div}\,}
\newcommand{\Z}{{\mathbb Z}}
\newcommand{\R}{{\mathbb R}}
\newcommand{\Dis}{{\mathbb D}}
\newcommand{\Sf}{{\mathbb S}}
\newcommand{\eps}{\varepsilon}
\newcommand{\dom}[1]{{\rm dom}(#1)}
\newcommand{\ws}[1]{|d#1|}
\newcommand{\wsr}[1]{|d_{\rho}#1|}
\newcommand{\gra}[1]{{\mathcal G}_{#1}}
\newcommand{\ball}[2]{B_{#2}\left(#1\right)}
\newcommand{\epi}[1]{{\rm epi}\left(#1\right)}
\newcommand{\refe}[1]{{(\ref{#1})}}
\newcommand{\dys}{\displaystyle}
\newcommand{\into}{{\int_{B_1}}}
\newcommand{\hsob}{H^1_0(B_1)}
\newcommand{\elle}[1]{L^{#1}(B_1)}
\newcommand{\js}[1]{j_s({#1},|\nabla {#1}|)}
\newcommand{\jxi}[1]{j_\xi({#1},|\nabla {#1}|)}
\newcommand{\cal}{\mathcal}
\numberwithin{equation}{section}
\newtheorem{theorem}{Theorem}[section]
\newtheorem{proposition}[theorem]{Proposition}
\newtheorem{lemma}[theorem]{Lemma}
\newtheorem{remark}[theorem]{Remark}
\newtheorem{definition}[theorem]{Definition}
\theoremstyle{definition}
\newcommand{\edm}{\end{displaymath}}
\numberwithin{equation}{section}
\newcommand{\bos}{\begin{remark}\rm}
\newcommand{\ben}{\begin{enumerate}}
\newcommand{\een}{\end{enumerate}}
\newcommand{\be}{\begin{equation}}
\newcommand{\ee}{\end{equation}}
\title[Symmetric minimax critical points 
for nonsmooth functionals]{Radial symmetry of minimax critical \\
points for nonsmooth functionals}
\author{Marco Squassina}
\address{Dipartimento di Informatica
\newline\indent
Universit\`a degli Studi di Verona
\newline\indent
C\'a Vignal 2, Strada Le Grazie 15, I-37134 Verona, Italy}
\email{marco.squassina@univr.it}
\thanks{The author was partially supported by the
Italian PRIN Research Project 2007: {\em Metodi Variazionali e Topologici
nello Studio di Fenomeni non Lineari}}
\begin{document}

\subjclass[2000]{35J40; 58E05}

\keywords{Nonsmooth critical point theory, minimax theorems, polarization,
abstract symmetrization, Schwarz symmetrization, quasi-linear elliptic equations}

\begin{abstract}
We obtain the existence of radially symmetric and decreasing
solutions to a general class of quasi-linear
elliptic problems by a nonsmooth version of a symmetric minimax principle 
recently obtained by Jean Van Schaftingen.
\end{abstract}
\maketitle

\section{Introduction and main result}

\subsection{Introduction}
The main goal of this paper is to provide, in the framework of nonsmooth
critical point theory (cf.~\cite{corv,cdm,dm} and references therein), a general minimax variational
principle for a class of lower semi-continuous functionals satisfying certain
monotonicity properties under polarization, allowing to detect
critical points in the sense of the weak slope (cf.~Definitions~\ref{defslope} and~\ref{defnwslsc})
of minimax type which are radially symmetric and
decreasing. In the case of $C^1$ smooth functionals
these type of results were studied by Jean Van Schaftingen in~\cite{jvsh} (see also~\cite{jvsh1,jvsh2}),
where various applications to semi-linear elliptic equations of the form
$-\Delta u=g(|x|,u)$ in $\Omega$
with $u=0$ on $\partial\Omega$ were also derived under suitable assumptions
on $g$, when $\Omega$ is either a ball $B_1$ or an annulus (see also~\cite{jvshW}). 
On the other hand, typically, in the general context
of quasi-linear problems of variational type, the energy functional 
$f:H^1_0(B_1)\to\R\cup \{+\infty\}$ is, say,
\begin{equation}
	\label{quasi-ff}
f(u)=\int_{B_1}j(u,|\nabla u|)dx
-\int_{B_1}G(|x|,u)dx,
\end{equation}
and under reasonable assumptions $f$ is merely either lower semi-continuous 
or continuous on $H^1_0(B_1)$, depending on the
growth conditions which are imposed on $j$ and $G$ 
(cf.~\cite{canino,pelsqu,toulouse}). A class of minimization problems, 
constrained to the unit sphere of $L^p(\R^N)$, for functionals~\eqref{quasi-ff} defined on the whole $\R^N$ has been recently
investigated in~\cite{HSq} by exploiting the following generalized Polya-Szeg\"o
and Hardy-Littlewood type inequalities for Schwarz symmetrization
\begin{equation*}
\int_{\R^N}j(u^*,|\nabla u^*|)dx
\leq\int_{\R^N}j(u,|\nabla u|)dx,\quad
\int_{\R^N}G(|x|,u)dx \leq\int_{\R^N}G(|x|,u^*)dx,
\end{equation*}
the latter holding true under suitable monotonicity conditions 
on $G$ in the radial argument. These inequalities also hold in the unit ball $B_1$ and immediately yield 
$f(u^*)\leq f(u)$ for all $u\in H^1_0(B_1)$, namely~\eqref{quasi-ff} decreases upon Schwarz
symmetrization. In turn, the existence of a global minimizer for $f$ on a sphere 
$\{u\in H^1_0(B_1):\|u\|_{L^p}=1\}$, with $p>1$, immediately yields
the existence of a radially symmetric and decreasing minimizer.
The first of the previous symmetrization inequalities holds under mild assumptions,
allowing $j(s,|\xi|)$ to be unbounded with respect to $s$, say, for instance 
$j(s,|\xi|)\leq\alpha(|s|)|\xi|^p$ where $\alpha:\R^+\to\R^+$ is an increasing function.
This constrained minimization
problems arise, for instance, in the study of standing wave solutions for semi-linear
and quasi-linear Schr\"odinger equations (see~\cite{CJS} for a recent study).
Concerning the study of free critical points for $f$, in~\cite{pelsqu} it was
obtained existence of infinitely many critical points via $\Z_2$-symmetric nonsmooth mountain
pass theorems, under (a subset of) the assumptions
listed in Section~\ref{mainres} (see also~\cite{sq-monograph} and references therein
for various applications of nonsmooth critical point theory to quasi-linear elliptic problems). 
In this paper, we shall prove a general nonsmooth
minimax principle (cf.~Theorem~\ref{mpconc}) and, in turn, we shall derive the main abstract 
result of the paper (cf.~Theorem~\ref{mpconc-symm}), a 
symmetric version of Theorem~\ref{mpconc} working
for a large class of lower semi-continuous functionals 
(in abstract spaces) which are decreasing upon (abstract) polarization. 
In our main concrete result (cf.~Theorem~\ref{mainth}) we state
the existence of a nontrivial radially symmetric and decreasing distributional solution of problem 
\begin{equation}
	\label{theproblem}
	\tag{$P$}
\begin{cases}
-\,\dvg(j_\xi(u,|\nabla u|)) +j_s(u,|\nabla u|)=g(|x|,u), & \text{in $B_1$}, \\
\quad u=0, & \text{on $\partial B_1$},
\end{cases}
\end{equation}
corresponding to the {\em mountain pass critical level} of the functional $f$. 
The weak slope critical points $u$ of $f$ naturally 
correspond to generalized solutions (see Definition~\ref{defsol}) of problem~\eqref{theproblem},
which become in turn distributional by showing that $u$ is bounded.
We point out that Theorem~\ref{mpconc-symm} often provides, in general contexts, also an alternative tool
to concentration compactness arguments, see Remark~\ref{conccomp-r} for more details.
Even in the classical cases such as $j(s,\xi)=|\xi|^2$, if the nonlinearity $g(|x|,s)=D_sG(|x|,s)$ is a merely
continuous function, the moving plane argument (cf.~\cite{gnn}) and the homotopy
argument due to Brock (cf.~\cite{brock}) yielding local symmetry of positive solutions
cannot be applied. In the general quasi-linear setting,
even for functions $g$ of class $C^1$, to the author's knowledge, no symmetry results
based upon moving plane arguments are available in the current literature. On the contrary, for the $p$-Laplacian
operator $j(|\xi|)=|\xi|^p$, there are various results for positive solutions 
and ${\rm Lip}_{{\rm loc}}$ and autonomous 
nonlinearities (cf.~\cite{brock1,Dam,DS,DS1} for equations and~\cite{mss} for systems).
Finally we notice that, in some cases, the symmetry can be inferred
by Palais's {\em symmetric criticality principle} (cf.~\cite{palais}) restricting
the functional to radial functions. Of course,
in this case, one would loose the global mountain pass minimization property.

\subsection{The main concrete result}
\label{mainres}

Let $B_1$ be the unit ball in $\R^N$ centered at the origin, $N\geq 3$ and let 
$f:H^1_0(B_1)\to \R\cup\{+\infty\}$ be the functional defined by
\begin{equation*}
f(u)=\int_{B_1}j(u,|\nabla u|)dx
-\int_{B_1}G(|x|,u)dx,
\end{equation*}
where $j(s,|\xi|):\R\times\R^+\to\R$ is of class $C^1$. We consider the following assumptions.

\subsubsection{Assumptions on $j$}
We assume that for every $s$ in $\R$
\begin{equation}\label{j1}
\text{$\big\{|\xi|\mapsto j(s,|\xi|)\big\}$
is strictly convex and increasing}.
\end{equation}
Moreover, there exist a constant $\alpha_0>0$ and
a positive increasing function $\alpha\in C(\R)$
such that, for every $(s,\xi)\in\R\times\R^N$, it holds
\begin{equation}\label{j2}
\alpha_0|\xi|^2\leq j(s,|\xi|)\leq \alpha(|s|)|\xi|^2.
\end{equation}
The functions $j_s(s,|\xi|)$ and $j_\xi (s,\xi)$
denote the derivatives of $j(s,\xi)$ with
respect of the variables $s$ and $\xi$
respectively.
Regarding the function $j_s(s,|\xi|)$,
we assume that there exist a positive increasing function
$\beta\in C(\R)$ and a positive constant $R$ such that 
\begin{equation}\label{j3}
\big|j_s(s,|\xi|)\big|\leq \beta(|s|)|\xi|^2,\qquad \text{
for every $s$ in $\R$ and all $\xi\in\R^N$},
\end{equation}
\begin{equation}\label{j4}
\qquad \qquad  j_s(s,|\xi|)s\geq 0,
\qquad \text{for every $s$ in $\R$ with $|s|\geq R$ and all $\xi\in\R^N$.}
\end{equation}
Furthermore, we assume that
\begin{equation}
	\label{opposite}
\qquad \qquad j(-s,|\xi|)\leq j(s,|\xi|),
\qquad\text{for every $s$ in $\R^-$ and all $\xi\in\R^N$}.
\end{equation}
\subsubsection{Assumptions on $g,G$}
The function $G(|x|,s)$ is the primitive with respect to $s$ of a Carath\'eodory function $g(|x|,s)$
such that $G(|x|,0)=0$. We assume that there exist $p\in (2,2N/(N-2))$, 
a positive constant $C$, $\mu>2$ and $R'>0$ such that
\begin{equation}
\label{gg1}
|g(|x|,s)|\leq C(1+|s|^{p-1}),\qquad\text{for every $s$ in $\R$ and $x\in B_1$,}
\end{equation}
\begin{equation}
\label{gg2}
0<\mu G(|x|,s)\leq g(|x|,s)s,\qquad\text{for every $s$ in $\R$ with $|s|\geq R'$ and $x\in B_1$,}
\end{equation}
\begin{equation}
\label{gg3}
\lim_{s\to 0}  \frac{g(|x|,s)}{s}=0,\quad\text{uniformly in $B_1$},
\end{equation}
\begin{equation}
\label{gg5}
g(|x|,s)\geq g(|y|,s),\qquad\text{for every $s\in\R$ and $x,y\in B_1$ with $|x|\leq |y|$},
\end{equation}
\begin{equation}
\label{gg6}
G(|x|,s)\leq G(|x|,-s),\qquad\text{for every $s\in\R^-$ and $x\in B_1$}.
\end{equation}
\subsubsection{Joint assumptions of $j$ and $g$}
There exist $R''>0$ and $\delta>0$ such that
\begin{equation}
\label{j5}
pj(s,|\xi|)-j_s(s,|\xi|)s-j_\xi(s,|\xi|)\cdot\xi\geq \delta|\xi|^2,
\quad\text{for every $s\in\R$ with $|s|\geq R''$}
\end{equation}
and all $\xi\in\R^N$. Finally, it holds 
\begin{equation}
\label{j6}
\lim\limits_{|s|\to\infty} \frac{\alpha(|s|)}{|s|^{p-2}}=0.
\end{equation}

\begin{remark}\rm
	The asymptotic sign condition~\eqref{j4} is typical for quasi-linear elliptic problems 
	and, in general, plays a r\^ole both in the regularity
	theory (see Frehse's counterexample in~\cite{frese}) and in the verification of the Palais-Smale 
	condition (see e.g.~\cite{pelsqu,toulouse}). Assumption~\eqref{gg5} is necessary
	in order to get some inequality for $\int G(|x|,u)$ under polarization, while~\eqref{j5} is used
	to prove that Palais-Smale sequences are bounded (see~\cite{pelsqu,toulouse}). 
	Assumption~\eqref{j6} is needed for the functional
	to satisfy some Mountain Pass geometry (see~\cite{pelsqu}).
	Finally, we point out that the growth~\eqref{gg1} on $g$ could be weakened, allowing
	that for all $\eps>0$ there exists $a_\eps\in L^r(B_1)$ with $r>\frac{2N}{N+2}$ such that $|g(|x|,s)|\leq a_\eps(x)+\eps|s|^{(N+2)/(N-2)}$ for all $x\in B_1$ and $s\in\R$. If $r>N/2$ the solutions
	are bounded (cf.~\cite[Theorem 7.1(b)]{pelsqu}).
\end{remark}

\subsubsection{Statement} 

The principal result of the paper is the following general
Ambrosetti-Rabinowitz~\cite{ambrab} mountain type theorem
which includes the additional information on the radial symmetry
of the solution.

\begin{theorem}
\label{mainth}
 Assume that~\eqref{j1}-\eqref{j6} hold. Then there exists a nontrivial
radially symmetric and decreasing mountain pass distributional
solution $u\in H^1_0\cap L^\infty(B_1)$ to~\eqref{theproblem}. 
\end{theorem}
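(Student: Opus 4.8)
The plan is to realize $u$ as a weak‑slope critical point of $f$ at the mountain pass level, obtained through the nonsmooth symmetric minimax principle (Theorem~\ref{mpconc-symm}), and then to upgrade this critical point to a genuine distributional solution of~\eqref{theproblem} which is, in addition, radial and radially decreasing. Concretely, I would proceed in four stages.

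\medskip\noindent\emph{Step 1: Functional setting and mountain pass geometry.} Working in the Hilbert space $X=H^1_0(B_1)$ with the Schwarz symmetrization $u\mapsto u^*$ (equivalently, the family of polarizations $u\mapsto u_H$ induced by hyperplanes through the origin), I would first check that $f$ is well defined and lower semi‑continuous on $X$: the term $\int_{B_1}j(u,|\nabla u|)\,dx$ is l.s.c.\ by~\eqref{j1}--\eqref{j2} (convexity in the gradient plus the quadratic two‑sided bound), and the subcritical growth~\eqref{gg1} with $p<2N/(N-2)$ makes $u\mapsto\int_{B_1}G(|x|,u)\,dx$ continuous on $X$ by Rellich compactness. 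Then, using~\eqref{gg3} to control $G$ near the origin and~\eqref{gg1} plus~\eqref{j2} and~\eqref{j6} to beat the quadratic leading term $\int j$ by the superquadratic growth of $G$ (via~\eqref{gg2}), I would verify the standard mountain pass geometry: $f(0)=0$, $f\ge\rho_0>0$ on a small sphere $\|u\|=\rho$, and $f(t\bar u)\to-\infty$ along a suitable nonnegative radial direction $\bar u\ge 0$ that may be taken radially decreasing. Crucially, this last ray lies in the symmetric cone, so the minimax class used in Theorem~\ref{mpconc-symm} is nonempty.

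\medskip\noindent\emph{Step 2: Symmetric structure under polarization.} Here I would invoke the generalized Polya--Szeg\H{o} inequality $\int_{B_1}j(u_H,|\nabla u_H|)\,dx\le\int_{B_1}j(u,|\nabla u|)\,dx$, which holds under~\eqref{j1} (monotone convex dependence on $|\xi|$), together with~\eqref{opposite} handling the reflection $s\mapsto -s$; and the Hardy--Littlewood type inequality $\int_{B_1}G(|x|,u)\,dx\le\int_{B_1}G(|x|,u_H)\,dx$, which follows from the radial monotonicity~\eqref{gg5} and~\eqref{gg6}. Combining these gives $f(u_H)\le f(u)$ for every admissible polarization $H$, i.e.\ $f$ is \emph{decreasing under polarization} in the sense required by Theorem~\ref{mpconc-symm}. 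This is precisely the hypothesis that allows the symmetric minimax principle to produce a critical point $u$ at the mountain pass level $c=\inf_{\gamma}\max_t f(\gamma(t))$ which is fixed by all polarizations, hence (by the standard characterization) radially symmetric and radially nonincreasing.

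\medskip\noindent\emph{Step 3: Palais--Smale / Cerami condition.} To apply the minimax theorem I must verify a compactness condition for $f$ in the weak slope sense. I would take a Cerami sequence $(u_n)$ at level $c$ and first show it is bounded: testing the (generalized) Euler equation with $u_n$ and combining with $f(u_n)\to c$, the key algebraic inequality~\eqref{j5} produces a coercive term $\delta\int|\nabla u_n|^2$ on the set $\{|u_n|\ge R''\}$, while~\eqref{gg2} gives the superquadratic control of $G$; together with~\eqref{j3}--\eqref{j4} these yield $\sup_n\|u_n\|<\infty$. Then, up to a subsequence $u_n\convd u$ in $X$; a by‑now standard argument for functionals of the form~\eqref{quasi-ff} (see the cited works~\cite{pelsqu,toulouse}) — testing with $(u_n-u)$ truncated at level $R$, exploiting the sign condition~\eqref{j4}, the growth~\eqref{j3}, strict convexity~\eqref{j1}, and compactness of the lower‑order terms — gives $u_n\to u$ strongly in $X$, establishing the Cerami/Palais--Smale condition for the weak slope at every positive level. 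I expect \textbf{this step to be the main obstacle}, since strong convergence of Palais--Smale sequences for merely l.s.c.\ quasi‑linear functionals is the delicate point: one cannot test freely with $u_n-u\notin L^\infty$, so one must run the truncation argument carefully and use~\eqref{j4} to dispose of the far‑field contribution, exactly as in the nonsmooth critical point literature.

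\medskip\noindent\emph{Step 4: From weak slope critical point to bounded distributional solution.} Having produced $u\in H^1_0(B_1)$, radial and radially decreasing, with $|d f|(u)=0$ at level $c>0$ (so $u\neq 0$), I would appeal to the structural description of weak‑slope critical points for functionals of type~\eqref{quasi-ff} (Definition~\ref{defsol} and the associated characterization, as in~\cite{canino,pelsqu}): such $u$ is a generalized solution, meaning $j_\xi(u,|\nabla u|)\in L^1_{\mathrm{loc}}$, $j_s(u,|\nabla u|)u\in L^1$, and the Euler equation holds against test functions in $H^1_0\cap L^\infty(B_1)$. Finally I would prove $u\in L^\infty(B_1)$ by a Moser/Stampacchia iteration on the weak formulation, using~\eqref{j2} from below, the subcritical growth~\eqref{gg1}, and~\eqref{j4} to absorb the $j_s$ term; boundedness then makes every term in the equation a genuine distribution, so $u$ is a bona fide distributional solution of~\eqref{theproblem}, completing the proof.
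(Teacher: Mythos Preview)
Your overall strategy matches the paper's, and each ingredient (mountain pass geometry, monotonicity under polarization, Palais--Smale, upgrade from generalized to distributional solution) is correctly identified. However, there is a genuine gap in how you combine them.

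You write that Theorem~\ref{mpconc-symm} ``produces a critical point $u$ \dots\ which is fixed by all polarizations, hence \dots\ radially symmetric.'' This is not what the theorem delivers. Its conclusion~\eqref{conclusmmm} gives, for each $\eps,\delta>0$, a point $u=u_{\eps,\delta}$ with $|df|(u)\leq 3\eps/\delta$ and $\|u-u^*\|_V\leq 3(2K+1)\delta$: the point is \emph{almost} critical and \emph{almost} symmetric (in the weak norm $V$, not in $X$), nothing more. Palais--Smale is \emph{not} a hypothesis of Theorem~\ref{mpconc-symm}; it enters afterward. Concretely, one takes $\eps_h=h^{-2}$, $\delta_h=h^{-1}$ to obtain a Palais--Smale sequence $(u_h)$ at level $c$ with the additional information $\|u_h-u_h^*\|_{L^2}\to 0$; Lemma~\ref{fps} then gives $u_h\to\hat u$ strongly in $H^1_0$, hence in $L^2$; since Schwarz symmetrization is nonexpansive in $L^2$, also $u_h^*\to\hat u^*$ in $L^2$; finally $\hat u=\hat u^*$ follows from $\|u_h-u_h^*\|_{L^2}\to 0$ by uniqueness of limits. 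This limiting argument is precisely where the radial symmetry of the actual critical point comes from, and it is absent from your outline.

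A second, smaller omission: to apply Theorem~\ref{mpconc-symm} to a merely lower semi-continuous $f$ you must verify the epigraph condition~\eqref{keycond}, namely $|d{\mathcal G}_f|(u,\xi)=1$ whenever $f(u)<\xi$. The paper checks this via Lemma~\ref{penduno} (proved in~\cite{pelsqu}); it is not automatic and should appear among your preliminary verifications in Step~1.
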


Although we state our result in the unit ball, a 
similar statement could be provided for the functional defined 
in the unit ball or in the annulus without the monotonicity condition~\eqref{gg5} on $g$,
yielding a distributional solution $u$ which is invariant under spherical cap symmetrization,
namely $u$ depends solely upon $|x|$ and an angular variable.
In~\cite{jvsh} further applications of the symmetric minimax principle
for $C^1$ smooth functionals are provided, for instance the case of linking geometry. We limit ourself to the statement
of Theorem~\ref{mainth} although also in the nonsmooth setting some of 
the applications discussed in~\cite{jvsh} could be derived from the symmetric principle,
Theorem~\ref{mpconc-symm}.

\smallskip
\section{Tools from symmetrization theory}

\subsection{Abstract symmetrization}
We recall a definition from~\cite{jvsh}.
\vskip1pt
\noindent
Let $X$ and $V$ be two Banach spaces and $S\subset X$.
We consider two maps $*:S\to V$, $u\mapsto u^*$ 
({\em symmetrization map}) and $h:S\times {\mathcal H}_*\to S$,
$(u,H)\mapsto u^H$ ({\em polarization map}), where ${\mathcal H}_*$ 
is a path-connected topological space. We assume that the following 
conditions hold:
\begin{enumerate}
 \item $X$ is continuously embedded in $V$;
 \item $h$ is a continuous mapping;
\item for each $u\in S$ and $H\in {\mathcal H}_*$ it holds $(u^*)^H=(u^H)^*=u^*$ and $u^{HH}=u^H$;
\item there exists a sequence $(H_m)$ in ${\mathcal H}_*$ such that $u^{H_1\cdots H_m}$ converges
to $u^*$ in $V$;
\item for every $u,v\in S$ and $H\in {\mathcal H}_*$ it holds
$\|u^H-v^H\|_V\leq \|u-v\|_V$.
\end{enumerate}

\subsubsection{Polarization}
A subset $H$ of $\R^N$ is called a polarizer if it is a closed affine half-space
of $\R^N$, namely the set of points $x$ which satisfy $\alpha\cdot x\leq \beta$
for some $\alpha\in \R^N$ and $\beta\in\R$ with $|\alpha|=1$. The family of polarizers
can be compactified by adding two polarizers $H_{+\infty}$ and $H_{-\infty}$ such that
$H_m\to H_{+\infty}$ if $\beta_m\to+\infty$ and $H_m\to H_{-\infty}$ if $\beta_m\to-\infty$.
Given $x$ in $\R^N$
and a polarizer $H$ the reflection of $x$ with respect to the boundary of $H$ is
denoted by $x_H$. The polarization of a function $u:\R^N\to\R^+$ by a polarizer $H$
is the function $u^H:\R^N\to\R^+$ defined by
\begin{equation}
 \label{polarizationdef}
u^H(x)=
\begin{cases}
 \max\{u(x),u(x_H)\}, & \text{if $x\in H$} \\
 \min\{u(x),u(x_H)\}, & \text{if $x\in \R^N\setminus H$.} \\
\end{cases}
\end{equation}
The polarization $u^H$ of a function defined on $C\subset \R^N$
is just the restriction to $C$ of the polarization of the extension $\tilde u:\R^N\to\R$ of 
$u$ to zero outside $C$. The polarization of a function which may change sign is defined
by $u^H:=|u|^H$, for any given polarizer $H$.

\subsubsection{Schwarz symmetrization}
The Schwarz symmetrization of a set $C\subset \R^N$ is the unique open ball $C^*$
such that ${\mathcal L}^N(C^*)={\mathcal L}^N(C)$, being ${\mathcal L}^N$ the
$N$-dimensional Lebesgue measure. If the measure of $C$ is zero, then we set $C^*=\emptyset$,
while if the measure of $C$ is not finite, we put $C^*=\R^N$. The Schwarz symmetrization
of a measurable function $u:C\to\R^+$ is the unique function $u^*:C^*\to\R^+$ such that,
for all $t\in\R$, it holds $\{u^*>t\}=\{u>t\}^*.$
A function is admissible for the Schwarz symmetrization if it is nonnegative and, for every $\eps>0$,
the Lebesgue measure of $\{u>\eps\}$ is finite. Let us set
$$
\Omega=B(0,R)\subset\R^N,
\quad
X=W^{1,p}_0(\Omega),
\quad
S=W^{1,p}_0(\Omega,\R^+),
\quad
V=L^p\cap L^{p^*}(\Omega),
$$
and ${\mathcal H}_*=\{H\in{\mathcal H}: \text{$0\in H$ or $H=H_{+\infty}$}\}$. Then the 
requirements (1),(2),(3),(4),(5) in abstract symmetrization framework are satisfied
by~\cite[Proposition 2.3, Theorem 2.1, Proposition 2.5]{jvsh}. 
Given a function $u:C\to\R$ and considering the extension 
$\tilde u:\R^N\to\R$ of $u$ to zero outside $C$, 
we have $(\tilde u)^*|_{C^*}=u^*$ and $(\tilde u)^*|_{\R^N\setminus C^*}=0$. The symmetrization
for $u$ which are not nonnegative can be the defined by $u^\star:=|u|^*$. In this case we set
$\Omega=B(0,R)$, $X=S=W^{1,p}_0(\Omega)$ and $V=L^p\cap L^{p^*}(\Omega)$.

\begin{remark}\rm
Different types of symmetrization can be considered, such as the Steiner 
symmetrization or the spherical cap symmetrization, for which the abstract framework 
above is fulfilled and our main result would work. We refer the interested reader
to~\cite{jvsh} and to the references therein for further details. See also~\cite{jvsh1,jvsh2}.
\end{remark}
 
\subsection{Symmetric approximation of curves}

In general, except in the one dimensional case (see~\cite{coroncont}) the Schwarz symmetric rearrangement
is not a continuous function (see~\cite{almgrenlieb}). To overcome this problem,
we recall a very useful and general approximation 
tool for continuous curves in Banach spaces provided 
in Jean Van Schaftingen's paper (see~\cite[Proposition 3.1]{jvsh}).

\begin{proposition}
\label{approxxres}
Let $X$ and $V$ be two Banach spaces, $S\subseteq X$, $*$ and ${\mathcal H}_*$ 
which satisfy the requirements of the
abstract symmetrization framework. Let $M$ be a metric space, $M_0$ and $M_1$ be 
disjoint closed sets of $M$ and $\gamma \in C(M,X)$. Let $H_0\in {\mathcal H}_*$
and $\gamma(M)\subset S$. Then, for every $\delta>0$, there exists a curve 
$\tilde\gamma\in C(M,X)$ such that 
$$
\|\tilde\gamma(\tau)-\gamma(\tau)^*\|_V\leq \delta,\quad\forall \tau\in M_1,
$$
$\tilde\gamma (\tau)=\gamma(\tau)^{H_1\cdots H_{[\theta]}H_\theta}$
for all $\tau\in M$, with $H_s\in {\mathcal H}_*$ for $s\geq 0$,
$\tilde\gamma(\tau)=\gamma(\tau)^{H_0}$ for $\tau\in M_0$. Here $[\theta]$
denotes the largest integer less than or equal to $\theta$.
\end{proposition}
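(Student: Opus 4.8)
The statement to prove is Proposition~\ref{approxxres}, the symmetric approximation of curves in Banach spaces. This is a direct transcription of~\cite[Proposition 3.1]{jvsh}, so the plan is to follow Van Schaftingen's scheme, which rests entirely on the five axioms of the abstract symmetrization framework and does not use any specific structure of $X$, $V$, $S$, or the functional. The guiding idea is this: property~(4) gives, for each $u\in S$, a sequence $(H_m)$ of polarizers with $u^{H_1\cdots H_m}\to u^*$ in $V$; property~(5) makes the polarization maps $1$-Lipschitz on $V$, hence the iterated polarization by a fixed finite word of polarizers is continuous in $u$; and property~(2) makes $h$ jointly continuous in $(u,H)$. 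The goal is to glue, continuously in the parameter $\tau\in M$, finitely many such polarizations whose length $[\theta(\tau)]$ and terminal polarizer $H_{\theta(\tau)}$ vary continuously, so that on $M_1$ the result is $\delta$-close to $\gamma(\tau)^*$, on $M_0$ it equals the prescribed $\gamma(\tau)^{H_0}$, and on all of $M$ it has the stated form $\gamma(\tau)^{H_1\cdots H_{[\theta]}H_\theta}$.

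\textbf{Step 1 (pointwise approximation, uniform on compacta).} For each $\tau\in M_1$, use property~(4) to pick $m(\tau)$ and a word $H_1^\tau,\dots,H_{m(\tau)}^\tau$ with $\|\gamma(\tau)^{H_1^\tau\cdots H_{m(\tau)}^\tau}-\gamma(\tau)^*\|_V\le \delta/2$. By continuity of $\gamma$ and of the iterated polarization (property~(5) plus continuity of $\gamma$, giving that $u\mapsto u^{H_1\cdots H_m}$ composed with $\gamma$ is continuous into $V$), this inequality persists for $\tau$ in a neighborhood of $\tau$ in $M_1$. Extract a finite subcover of a compact portion — or, if $M$ is not compact, work with a locally finite cover and a subordinate partition of unity; the construction is local, so paracompactness of metric spaces suffices.

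\textbf{Step 2 (interpolating the word length and terminal polarizer).} Here is where the real \textbf{bulk of the argument} lies, and where I expect the main obstacle. One must build a continuous function $\theta:M\to[0,\infty)$ and continuous selections $\tau\mapsto H_i(\tau)\in{\mathcal H}_*$ ($1\le i\le \lceil\theta(\tau)\rceil$), with $\theta\equiv 0$ (hence the empty word, or rather $\theta=0$ encoding the base polarizer $H_0$) near $M_0$ and $\theta(\tau)\ge m(\tau)$ on $M_1$, so that $\tilde\gamma(\tau):=\gamma(\tau)^{H_1(\tau)\cdots H_{[\theta(\tau)]}(\tau)\,H_{\theta(\tau)}(\tau)}$ is continuous. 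The subtlety is the transition as $\theta$ crosses an integer value $k$: when $\theta=k$ the fractional polarizer $H_\theta$ must degenerate so that $\gamma(\tau)^{H_1\cdots H_{k-1}H_k}$ computed with a $k$-letter word agrees with the $k$-letter word $H_1\cdots H_{k-1}H_k$ read from the $(k+1)$-letter scheme; this is handled using the compactification of ${\mathcal H}_*$ by $H_{\pm\infty}$ (the polarizers at infinity act trivially, since $u^{H_{\pm\infty}}\in\{u,\,0\text{-extension artifacts}\}$ reduces to the identity on the relevant set), together with the idempotency $u^{HH}=u^H$ from property~(3). Path-connectedness of ${\mathcal H}_*$ is used to connect the polarizer $H_0$ prescribed on $M_0$ to the first letters $H_1^\tau$ of the approximating words via continuous paths, and to run the fractional parameter $H_\theta$ continuously from a trivial polarizer up to the next genuine letter. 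Joint continuity of $h$ (property~(2)) is exactly what makes the resulting composition $\tau\mapsto \gamma(\tau)^{\cdots H_{\theta(\tau)}(\tau)}$ continuous in $\tau$ even while both the word and the last polarizer move.

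\textbf{Step 3 (verifying the three conclusions).} On $M_1$: by construction $\theta(\tau)\ge m(\tau)$, so after the $m(\tau)$-th letter the word is exactly $H_1^\tau\cdots H_{m(\tau)}^\tau$ possibly extended by further polarizers; using the monotone convergence in property~(4) (the later iterates do not move us away from $u^*$ — one checks $\|u^{H_1\cdots H_n}-u^*\|_V$ is nonincreasing in $n$ via property~(5) applied to $u^*=(u^*)^H$ from property~(3): $\|u^{H_1\cdots H_{n+1}}-u^*\|_V=\|(u^{H_1\cdots H_n})^{H_{n+1}}-(u^*)^{H_{n+1}}\|_V\le\|u^{H_1\cdots H_n}-u^*\|_V$), the whole word still lands within $\delta$ of $\gamma(\tau)^*$. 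On $M_0$: $\theta\equiv 0$ there, so $\tilde\gamma(\tau)=\gamma(\tau)^{H_0}$. On all of $M$: $\tilde\gamma$ has exactly the stated form and lands in $S$ because $S$ is stable under $h$, and $\tilde\gamma\in C(M,X)$ since the embedding $X\hookrightarrow V$ is continuous (property~(1)) and the construction was arranged to be continuous into $X$ (the polarization map $h$ takes values in $S\subset X$ and is continuous as a map into $X$; only the approximation estimate is measured in the weaker norm $V$, which is all property~(4) provides). The one genuinely delicate point, worth isolating as a lemma before the main construction, is the continuity of the "variable-length word" polarization across integer values of $\theta$; everything else is bookkeeping on top of the five axioms.
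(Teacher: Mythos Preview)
The paper does not prove this proposition at all: it is stated without proof and simply attributed to Van~Schaftingen~\cite[Proposition~3.1]{jvsh}. So there is no ``paper's own proof'' to compare against; your sketch is an attempted reconstruction of the argument in the cited reference.

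Your outline is faithful to Van~Schaftingen's scheme and the monotonicity observation in Step~3 (that $\|u^{H_1\cdots H_n}-u^*\|_V$ is nonincreasing in $n$, via $(u^*)^H=u^*$ and property~(5)) is exactly the right lemma to isolate. There is, however, one structural misreading that makes your Steps~1--2 more complicated than necessary. Property~(4) provides a \emph{single universal} sequence $(H_m)_{m\geq 1}$ in ${\mathcal H}_*$ such that $u^{H_1\cdots H_m}\to u^*$ in $V$ for \emph{every} $u\in S$; the polarizers do not depend on $u$, hence not on $\tau$. This is why the conclusion of the proposition reads ``$H_s\in{\mathcal H}_*$ for $s\geq 0$'' with no $\tau$-dependence: one fixes a continuous path $s\mapsto H_s$ in the path-connected space ${\mathcal H}_*$ starting at the prescribed $H_0$ and passing through the universal $H_1,H_2,\dots$ at integer times, and only the scalar function $\theta:M\to[0,\infty)$ varies with $\tau$. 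Your Step~2, with ``continuous selections $\tau\mapsto H_i(\tau)$'', is therefore solving a harder problem than required; the gluing of $\tau$-dependent words would itself need justification that you do not supply.

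With the universal sequence, the continuity of $\tilde\gamma$ across integer values of $\theta$ follows cleanly from joint continuity of $h$ and the idempotency $u^{HH}=u^H$ in property~(3): as $\theta\to k^-$ one has $\gamma(\tau)^{H_1\cdots H_{k-1}H_\theta}\to\gamma(\tau)^{H_1\cdots H_{k-1}H_k}$, while at $\theta=k$ the expression $\gamma(\tau)^{H_1\cdots H_k H_k}$ collapses to the same thing, and similarly from above. The compactified polarizers $H_{\pm\infty}$ are not needed for this transition. The remaining task---building a continuous $\theta$ with $\theta|_{M_0}=0$ and $[\theta(\tau)]$ large enough on $M_1$---is then a standard Urysohn/partition-of-unity construction on the metric space $M$, using that the set of $\tau$ for which $m$ iterations suffice is open (by the $1$-Lipschitz property of iterated polarization in $V$ and of $*$ itself, the latter being a $V$-limit of $1$-Lipschitz maps).
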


\section{Tools from nonsmooth critical point theory}
\label{sezastratta}

\subsection{Preliminary notions and results}
In this section we consider abstract notions
and results that will be used in the proof 
of the main results. For the definitions, we refer to~\cite{cdm,dm,ioffe,katriel},
where the theory was developed. 
Let $X$ be a metric space and let $f:X\to\bar\R$
be a function. We set
\begin{equation*}
\dom{f} = \left\{u\in X:\,f(u)<+\infty\right\}
\quad\text{and}\quad
\epi{f}=\left\{(u,\xi)\in X\times\R:\,f(u)\leq\xi\right\},
\end{equation*}
and let us define the function $\gra{f}:\epi{f}\to\R$ by 
\begin{equation}
\label{defg}
\gra{f}(u,\xi)=\xi.
\end{equation}
In the following, $\epi{f}$ will be endowed with the metric
$$
d\left((u,\xi),(v,\mu)\right)=
\left(d(u,v)^2+(\xi-\mu)^2\right)^{1/2},
$$
so that the function $\gra{f}$ is Lipschitz continuous of constant $1$.
\vskip4pt
\noindent

From now on we denote with $B(u,\delta)$ the open ball of center
$u$ and of radius $\delta$. We recall the definition of the weak slope for a
continuous function.

\begin{definition}\label{defslope}
Let $X$ be a metric space, $g:X \to \R $ a continuous function,
and $u\in X$. We denote by
$|dg|(u)$ the supremum of the real numbers $ \sigma$ in
$[0,\infty)$ such that there exist $ \delta >0$
and a continuous map
$
{\mathcal H}\,:\,B(u, \delta) \times[ 0, \delta]  \to X,
$
such that, for every $v$ in $B(u,\delta) $, and for every
$t$ in $[0,\delta]$ it results
\begin{equation*}
d({\mathcal H}(v,t),v) \leq t,\qquad
g({\mathcal H}(v,t)) \leq g(v)-\sigma t.
\end{equation*}
The extended real number $|dg|(u)$ is called the weak
slope of $g$ at $u$.
\end{definition}

Now, let us recall~\cite{dgmt} a device allowing to reduce the study of continuous
or lower semi-continuous functionals to that of Lipschitz functionals. 
The following result is proved in~\cite[Proposition 2.3]{dm}.

\begin{lemma}
\label{theorgra}
Let $f:X\to\R$ be a continuous function. 
Then, for every $(u,\xi)\in\epi{f}$, we have
$$
\ws{\gra{f}}(u,\xi)=
\left\{
\begin{array}{ll}
{{\ws{f}(u)}\over{\sqrt{1+\ws{f}(u)^2}}}
& \mbox{if $f(u)=\xi$ and $\ws{f}(u)<+\infty$,} \\
1
& \mbox{if $f(u)<\xi$ or $\ws{f}(u)=+\infty$.}
\end{array}
\right.
$$
\end{lemma}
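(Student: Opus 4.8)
The plan is to reduce the identity for $\ws{\gra{f}}(u,\xi)$ to the two cases appearing in the statement, treating the ``easy'' case $f(u)<\xi$ or $\ws{f}(u)=+\infty$ first and then the main case $f(u)=\xi$, $\ws{f}(u)<+\infty$. In the easy case one exhibits an explicit vertical deformation of $\epi{f}$: for $(v,\mu)$ near $(u,\xi)$ one pushes the $\R$-coordinate downward at unit speed, $\mathcal{H}((v,\mu),t)=(v,\mu-t)$, which stays in $\epi{f}$ as long as $\mu-t\geq f(v)$; since $\gra{f}$ is $1$-Lipschitz (by construction of the metric on $\epi{f}$) and this deformation decreases $\gra{f}$ exactly by $t$, it shows $\ws{\gra{f}}(u,\xi)\geq 1$, and the reverse inequality is immediate because $\gra{f}$ is $1$-Lipschitz. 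When $f(u)<\xi$ the ball of radius $\delta=(\xi-f(u))/2$ (shrunk so $f$ stays below $\mu$ on it, using lower semicontinuity for the competitor sets — here continuity of $f$) does the job; when $\ws{f}(u)=+\infty$ with $f(u)=\xi$ one can first move slightly into the region $f(v)<\mu$ and then apply the previous argument.

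\textbf{The main case.} Suppose $f(u)=\xi$ and $0\le\ws{f}(u)<+\infty$; write $s=\ws{f}(u)$ and let $\psi(s)=s/\sqrt{1+s^2}$. For the inequality $\ws{\gra{f}}(u,\xi)\ge\psi(s)$ one takes, for any $\sigma<s$, a deformation $\mathcal{H}_f:B(u,\delta)\times[0,\delta]\to X$ realizing the weak slope of $f$, so that $d(\mathcal{H}_f(v,t),v)\le t$ and $f(\mathcal{H}_f(v,t))\le f(v)-\sigma t$. One then lifts it to $\epi{f}$ by letting the point $(v,\mu)$ travel along the graph direction: set $\mathcal{H}((v,\mu),t)=\bigl(\mathcal{H}_f(v,\mu'(t)),\ \mu-\sigma\mu'(t)\bigr)$ for a suitable reparametrization $\mu'(t)$ chosen so that the displacement in the product metric $d((u,\xi),(v,\mu))=(d(u,v)^2+(\xi-\mu)^2)^{1/2}$ is at most $t$; the natural choice makes the horizontal component advance by $t/\sqrt{1+\sigma^2}$ while the vertical component drops by $\sigma t/\sqrt{1+\sigma^2}$, so that the total displacement is exactly $t$ and $\gra{f}$ decreases by $\sigma t/\sqrt{1+\sigma^2}=\psi(\sigma)t$. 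Letting $\sigma\uparrow s$ gives $\ws{\gra{f}}(u,\xi)\ge\psi(s)$. One must check that the lifted point stays in $\epi{f}$: since $f(\mathcal{H}_f(v,\mu'(t)))\le f(v)-\sigma\mu'(t)\le\mu-\sigma\mu'(t)$ whenever $(v,\mu)\in\epi{f}$, this holds automatically.

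\textbf{The reverse inequality and the obstacle.} For $\ws{\gra{f}}(u,\xi)\le\psi(s)$ one argues by contradiction: given a deformation $\mathcal{H}$ of $\epi{f}$ decreasing $\gra{f}$ at rate $\sigma'>\psi(s)$, one projects it back to $X$ by composing with the first-coordinate projection, obtaining a candidate deformation for $f$ near $u$. The content is that moving ``down the epigraph'' at vertical rate $\sigma'$ forces the base point to move at horizontal rate at least $\sqrt{(\sigma')^2/(1-(\sigma')^2)}\cdot(\text{decrease in }f)$-type bound — more precisely, since $d(\mathcal{H}((v,\mu),t),(v,\mu))\le t$ controls both the horizontal move and the vertical drop, one deduces that the base curve decreases $f$ at rate $>s$, contradicting the definition of $\ws{f}(u)=s$. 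The technical subtlety — and the step I expect to be the main obstacle — is the bookkeeping near the boundary of $\epi{f}$: one needs the deformed point to return to (or stay on) the graph $\mu=f(v)$ so that the projected deformation genuinely decreases $f$ and not merely $\gra{f}$; this is handled by replacing $\mathcal{H}((v,\mu),t)$ with the point obtained by first applying $\mathcal{H}$ and then, if $f$ of the new base point is strictly below the new height, sliding vertically back down to the graph (which only helps, as $\gra{f}$ is $1$-Lipschitz). Carrying out this last reduction carefully, together with the continuity requirements on $\mathcal{H}$, is where the proof in~\cite{dm} does the real work; here I would simply invoke~\cite[Proposition 2.3]{dm} after indicating these points. $\hfill\square$
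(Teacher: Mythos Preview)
The paper does not prove this lemma at all: it is stated with the preamble ``The following result is proved in~\cite[Proposition 2.3]{dm}'' and no argument is given. Your proposal ultimately invokes the very same reference, so there is no discrepancy; the sketch you add on top is a faithful outline of the argument in~\cite{dm} (lift a deformation of $f$ to $\epi{f}$ along the graph direction for the lower bound, project a deformation of $\gra{f}$ back to $X$ for the upper bound, and use the vertical deformation $(v,\mu)\mapsto(v,\mu-t)$ in the strict-epigraph case), so your write-up is consistent with---indeed more detailed than---what the paper does.
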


On the basis of the previous result, one can define the weak slope 
of a  lower semi-continuous function $f$
by using $|d\gra{f}|(u,f(u))$. More precisely, we have the following

\begin{definition}
\label{defnwslsc}
Let $f:X\to \bar\R$ be a lower semi-continuous function.
For every $u\in X$ such that $f(u)\in\R$, let
\begin{equation*}
\ws{f}(u)=
\begin{cases}
\dys \frac{\ws{\gra{f}}(u,f(u))}{{\sqrt{1-\ws{\gra{f}}(u,f(u))^2}}},
& \text{if $\ws{\gra{f}}(u,f(u))<1$}, \\
\noalign{\vskip4pt}
+\infty, & \text{if $\ws{\gra{f}}(u,f(u))=1$}.
\end{cases}
\end{equation*}
\end{definition}

The previous notions allow us to give the following
\begin{definition}
Let $X$ be a metric space and $f:X\to \R\cup\{+\infty\}$
a lower semi-continuous function.
We say that $u\in\dom{f}$ is a (lower) critical
point of $f$ if $\ws{f}(u)=0$.
We say that $c\in\R$ is a (lower) critical value of
$f$ if there exists a (lower) critical point $u\in\dom{f}$
of $f$ with $f(u)=c$.
\end{definition}

\begin{definition}\label{defps}
Let $X$ be a metric space, $f:X\to \R\cup\{+\infty\}$
a lower semi-continuous function and
let $c\in\R$. We say that $f$ satisfies
the Palais-Smale condition at level $c$ ($(PS)_c$
in short), if every sequence $(u_n)$ in $\dom{f}$ such that
$\ws{f}(u_n) \to 0$ and $f(u_n) \to c$
admits a subsequence $(u_{n_k})$ converging in $X$.
\end{definition}

In~\cite{cdm,dm} variational methods for lower semi-continuous
functionals are developed. Moreover, it is shown that
the following condition is fundamental in order to apply the abstract theory
to the study of lower semi-continuous functions
\begin{equation}
\label{keycond}
\forall (u,\xi)\in\epi{f}:\,\,\,
f(u)<\xi\,\,\,\Longrightarrow\,\,\,\ws{\gra{f}}(u,\xi)=1.
\end{equation}

\vskip5pt
\noindent
Let $\rho>0$ and assume that $\epi{f}$ is endowed with the metric
\begin{equation}
	\label{rhometric}
d_\rho\left((u,\xi),(v,\mu)\right)=
\left(d(u,v)^2+\rho^2(\xi-\mu)^2\right)^{1/2},
\end{equation}
Clearly the metric $d_\rho$ is equivalent to the metric $d$ on $\epi{f}$.
Moreover, with respect to $d_\rho$ the function $\gra{f}$ is 
Lipschitz continuous of constant $1/\rho$. 

\begin{proposition}
	\label{2metrics}
Let $f:X\to\bar\R$ be a function. Then
$$
\wsr{\gra{f}}(u,\xi)=
{{\ws{\gra{f}}(u,\xi)}\over{\sqrt{1+(\rho^2-1)\ws{\gra{f}}(u,\xi)^2}}},
$$
for every $(u,\xi)\in\epi{f}$. In particular, if $\ws{\gra{f}}(u,\xi)=1$, it follows
$\wsr{\gra{f}}(u,\xi)=\frac{1}{\rho}$.
\end{proposition}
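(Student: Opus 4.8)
The plan is to establish the two inequalities in the identity separately, by transporting deformations for one of the metrics $d$, $d_\rho$ on $\epi{f}$ into deformations for the other through time reparametrizations. Set $\Phi(\sigma):=\sigma\,(1+(\rho^2-1)\sigma^2)^{-1/2}$; since $\gra{f}$ is $1$-Lipschitz for $d$ we have $\ws{\gra{f}}(u,\xi)\in[0,1]$, and on $[0,1]$ the map $\Phi$ is well defined (because $1+(\rho^2-1)\sigma^2\ge\min\{1,\rho^2\}>0$), strictly increasing, with $\Phi(1)=1/\rho$; hence, once $\wsr{\gra{f}}(u,\xi)=\Phi(\ws{\gra{f}}(u,\xi))$ is proved, the last assertion follows by taking $\ws{\gra{f}}(u,\xi)=1$. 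The computational core I would isolate first is the following. If $\sigma<\ws{\gra{f}}(u,\xi)$ and $\mathcal{H}\colon B((u,\xi),\delta)\times[0,\delta]\to\epi{f}$ is an associated deformation for $d$, put $W((v,\mu),t):=\mu-\gra{f}(\mathcal{H}((v,\mu),t))$, which is $\ge\sigma t$, and let $a=a((v,\mu),t)$ be the distance in $X$ between the first components of $\mathcal{H}((v,\mu),t)$ and $(v,\mu)$; then $a^2+W^2\le t^2$, so that
$$ d_\rho(\mathcal{H}((v,\mu),t),(v,\mu))^2=a^2+\rho^2W^2\le t^2+(\rho^2-1)W^2, $$
and, symmetrically, for a deformation $\mathcal{K}$ for $d_\rho$ the $d$-displacement of $(v,\mu)\mapsto\mathcal{K}((v,\mu),\lambda t)$ is $\le\lambda^2t^2-(\rho^2-1)W^2$ whenever $a^2+\rho^2W^2\le\lambda^2t^2$.

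The ``easy'' inequality --- ``$\le$'' when $\rho\ge1$, ``$\ge$'' when $\rho\le1$ --- I would get by a constant rescaling of time. For $\rho\ge1$: take $\tau<\wsr{\gra{f}}(u,\xi)$ (recall $\wsr{\gra{f}}(u,\xi)\le 1/\rho$ since $\gra{f}$ is $(1/\rho)$-Lipschitz for $d_\rho$) and a deformation $\mathcal{K}$ for $d_\rho$ realizing the rate $\tau$; with $\lambda:=(1-(\rho^2-1)\tau^2)^{-1/2}$, finite because $(\rho^2-1)\tau^2\le 1-\rho^{-2}<1$, the map $\tilde{\mathcal{K}}((v,\mu),t):=\mathcal{K}((v,\mu),\lambda t)$ (on a suitably shrunk domain) has, by the second estimate above and $W\ge\tau\lambda t$, $d$-displacement $\le\lambda^2t^2(1-(\rho^2-1)\tau^2)=t^2$, while $\gra{f}$ drops by $\ge\tau\lambda t$; thus $\ws{\gra{f}}(u,\xi)\ge\tau\lambda=\Phi^{-1}(\tau)$, i.e. $\tau\le\Phi(\ws{\gra{f}}(u,\xi))$, and letting $\tau\uparrow\wsr{\gra{f}}(u,\xi)$ gives the claim. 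The case $\rho\le1$ is symmetric, rescaling a deformation for $d$ of rate $\sigma$ by $\lambda=(1+(\rho^2-1)\sigma^2)^{-1/2}>1$.

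For the other inequality --- ``$\ge$'' when $\rho\ge1$, ``$\le$'' when $\rho\le1$ --- a constant rescaling does not suffice and I would use a non-constant reparametrization. Take $\rho\ge1$, $\sigma<\ws{\gra{f}}(u,\xi)$, a deformation $\mathcal{H}$ of rate $\sigma$ and $W$ as above, and consider $g((v,\mu),t):=t^2+(\rho^2-1)W((v,\mu),t)^2$, continuous in $t$, vanishing at $0$, with $g((v,\mu),t)\ge t^2(1+(\rho^2-1)\sigma^2)$. Let $\tau((v,\mu),s):=\sup\{t\in[0,\delta]:g((v,\mu),t)\le s^2\}$; since the set is closed and $g((v,\mu),t)>s^2$ for $t>\tau$, one has $g((v,\mu),\tau)=s^2$ for $s<\delta$. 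Then $\tilde{\mathcal{H}}((v,\mu),s):=\mathcal{H}((v,\mu),\tau((v,\mu),s))$ has $d_\rho$-displacement $\le\sqrt{g((v,\mu),\tau)}=s$ by the first estimate, while from $g((v,\mu),\tau)=s^2$ and $W((v,\mu),\tau)\ge\sigma\tau$ one gets $s^2\le W((v,\mu),\tau)^2(1+(\rho^2-1)\sigma^2)\,\sigma^{-2}$, i.e. $\gra{f}$ decreases along $\tilde{\mathcal{H}}$ by $\ge\Phi(\sigma)s$. Hence $\wsr{\gra{f}}(u,\xi)\ge\Phi(\sigma)$, and $\sigma\uparrow\ws{\gra{f}}(u,\xi)$ together with the monotonicity of $\Phi$ concludes; the case $\rho\le1$ is symmetric.

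The hard part will be to make this last reparametrization rigorous. Since $t\mapsto W((v,\mu),t)$ need not be monotone, $g((v,\mu),\cdot)$ need not be either, so $\tau((v,\mu),s)$ may fail to depend continuously on $(v,\mu,s)$, which would destroy the continuity of $\tilde{\mathcal{H}}$ required by Definition~\ref{defslope}. I would resolve this by a more careful (by now standard in the nonsmooth framework) choice of the reparametrizing function, exploiting that it is enough to exhibit, for each $\varepsilon>0$, a $d_\rho$-deformation of rate $\Phi(\sigma)-\varepsilon$; this continuity issue is the only genuinely technical point of the proof.
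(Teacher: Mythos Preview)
Your ``easy'' inequality via constant time rescaling is correct in both regimes of $\rho$. The gap is in the other inequality. The reparametrization
$\tau((v,\mu),s)=\sup\{t\in[0,\delta]:g((v,\mu),t)\le s^2\}$
is a last hitting time of a level set of a continuous function and, as you yourself point out, is in general \emph{not} continuous in $(v,\mu,s)$: when $g((v,\mu),\cdot)$ grazes the level $s^2$ from below on an interval, a small perturbation of $(v,\mu)$ can make $\tau$ jump. Your proposed remedy --- allow an $\varepsilon$-loss in the rate and invoke a ``by now standard'' continuous choice --- is not substantiated; no concrete continuous selection is exhibited, and I do not see one that follows from the data you retain. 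As written, the hard direction is incomplete.

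The paper bypasses this difficulty entirely by a structural observation about $\gra{f}$ that you do not use: since $\epi{f}$ is upward closed in the second coordinate, one may \emph{replace} the second component of the deformation by its extremal linear profile and still land in $\epi{f}$. Concretely, if ${\cal H}=({\cal H}_1,{\cal H}_2)$ is a $d$-deformation of rate $\sigma$, set
\[
{\cal K}\big((v,\mu),t\big)=\Big({\cal H}_1\big((v,\mu),\tfrac{t}{\lambda}\big),\ \mu-\tfrac{\sigma t}{\lambda}\Big),\qquad \lambda=\sqrt{1+(\rho^2-1)\sigma^2}.
\]
This is manifestly continuous, and ${\cal K}\in\epi{f}$ because ${\cal H}_2\big((v,\mu),s\big)\le\mu-\sigma s\le {\cal K}_2$. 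From $|{\cal H}_2-\mu|\ge\sigma s$ one gets $d({\cal H}_1((v,\mu),s),v)^2\le s^2(1-\sigma^2)$, hence with $s=t/\lambda$
\[
d_\rho\big({\cal K}((v,\mu),t),(v,\mu)\big)^2\le s^2(1-\sigma^2)+\rho^2\sigma^2 s^2=s^2\big(1+(\rho^2-1)\sigma^2\big)=t^2,
\]
while $\gra{f}$ drops by exactly $\sigma s=\Phi(\sigma)\,t$. Thus $\wsr{\gra{f}}(u,\xi)\ge\Phi(\sigma)$ for every $\sigma<\ws{\gra{f}}(u,\xi)$, uniformly in $\rho$. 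The opposite inequality uses the companion construction starting from a $d_\rho$-deformation and $\hat\lambda=\sqrt{1-(\rho^2-1)\tau^2}$. In short, the right move is not a $(v,\mu)$-dependent time reparametrization but a modification of the deformation itself that freezes the second component to the linear lower barrier; after that, a \emph{constant} rescaling suffices in both directions.
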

\begin{proof}
	The proof follows the lines of the proof of~\cite[Proposition 2.3]{dm}. On the other hand, for
	the sake of completeness, we sketch the proof. Let us first prove that
$$
\wsr{\gra{f}}(u,\xi)\geq {{\ws{\gra{f}}(u,\xi)}\over{\sqrt{1+(\rho^2-1)\ws{\gra{f}}(u,\xi)^2}}},
$$
If $\ws{\gra{f}}(u,\xi)=0$, then there is nothing to prove. 
Otherwise, let $0<\sigma<\ws{\gra{f}}(u,\xi)$ and let
${\cal H}:\ball{u,\xi}{\delta} \times [0,\delta] \to \epi{f}$ be a
continuous map, according to the definition of weak slope, so that
$$
d({\cal H}_1((v,\mu),t),v)^2+|{\cal H}_2((v,\mu),t)-\mu|^2\leq t^2,\qquad
{\cal H}_2((v,\mu),t)\leq \mu-\sigma t,
$$
for all $t\in [0,\delta]$ and every $(v,\mu)\in \ball{u,\xi}{\delta}$.
Let now choose $\delta'>0$ (with $\delta'=\delta$ if $\rho\leq 1$) be such that $\delta'\leq \delta\sqrt{1+(\rho^2-1)\sigma^2}$
and consider the continuous function ${\cal K}=({\cal K}_1,{\cal K}_2):\ball{u,\xi}{\delta'}\times[0,\delta']\to \epi{f}$
defined by
\begin{align*}
{\cal K}_1((v,\mu),t) &={\cal H}_1\big((v,\mu),{t\over\sqrt{1+(\rho^2-1){\sigma}^2}}\big),\\
{\cal K}_2((v,\mu),t) &=\mu-{\sigma t\over\sqrt{1+(\rho^2-1){\sigma}^2}},
\end{align*}
Of course ${\cal K}((v,\mu),t)\in \epi{f}$ for all $t\in [0,\delta']$ and every $(v,\mu)\in \ball{u,\xi}{\delta'}$.
Moreover, we have
\begin{align*}
	& d_\rho({\cal K}((v,\mu),t),(v,\mu))^2  =d({\cal K}_1((v,\mu),t),v)^2+\rho^2|{\cal K}_2((v,\mu),t)-\mu|^2 \\
	\noalign{\vskip4pt}
	& =d\big({\cal H}_1((v,\mu),{t\over\sqrt{1+(\rho^2-1){\sigma}^2}}),v\big)^2
	+\rho^2\frac{\sigma^2t^2}{1+(\rho^2-1){\sigma}^2} \\
	& \leq {t^2\over {1+(\rho^2-1){\sigma}^2}}
	+\frac{(\rho^2-1)\sigma^2t^2}{1+(\rho^2-1){\sigma}^2}=t^2 ,	
\end{align*}
for all $t\in [0,\delta']$ and every $(v,\mu)\in \ball{u,\xi}{\delta'}$. Furthermore, we have
\begin{align*}
	& \gra{f}({\cal K}((v,\mu),t))={\cal K}_2((v,\mu),t)
	= \gra{f}(v,\mu)-{\sigma \over\sqrt{1+(\rho^2-1){\sigma}^2}}t,
\end{align*}
for all $t\in [0,\delta']$ and every $(v,\mu)\in \ball{u,\xi}{\delta'}$. In turn, we have
$$
\wsr{\gra{f}}(u,\xi)\geq \frac{\sigma}{\sqrt{1+(\rho^2-1)\sigma^2}},
$$
yielding, by the arbitrariness of $\sigma$,
$$
\wsr{\gra{f}}(u,\xi)\geq \frac{\ws{\gra{f}}(u,\xi)}{\sqrt{1+(\rho^2-1)\ws{\gra{f}}(u,\xi)^2}}.
$$
Concerning the proof of the opposite inequality,
it is sufficient to show that
$$
\ws{\gra{f}}(u,\xi)\geq {{\wsr{\gra{f}}(u,\xi)}\over{\sqrt{1-(\rho^2-1)\wsr{\gra{f}}(u,\xi)^2}}}.
$$
This can be achieved by arguing as above by considering, in place of ${\mathcal K}$, the continuous function
$\hat{\cal K}=(\hat{\cal K}_1,\hat{\cal K}_2):\ball{u,\xi}{\delta'}\times[0,\delta']\to \epi{f}$
defined by
\begin{align*}
\hat{\cal K}_1((v,\mu),t) &={\cal H}_1\big((v,\mu),{t\over\sqrt{1-(\rho^2-1){\sigma}^2}}\big),\\
\hat{\cal K}_2((v,\mu),t) &=\mu-{\sigma t\over\sqrt{1-(\rho^2-1){\sigma}^2}}.
\end{align*}
This concludes the proof.
\end{proof}

\begin{remark}\rm
The notion of weak slope for a function $f:X\to\overline{\R}$ (not even 
assumed to be lower semi-continuous) was also provided
(see~\cite[Definition 2.1]{campad}) in terms of local deformations, 
consistently with Definition~\ref{defslope} 
(see~\cite[Proposition 2.2]{campad}). Of course, the extended real number $\ws{f}(u)$ 
is independent of $\rho$ and, arguing as in~\cite[Proposition 2.3]{campad}, it is possible
to show that
$$
\wsr{\gra{f}}(u,f(u))=
{{\ws{f}(u)}\over{\sqrt{1+\rho^2\ws{f}(u)^2}}},
\qquad
\ws{\gra{f}}(u,f(u))=
{{\ws{f}(u)}\over{\sqrt{1+\ws{f}(u)^2}}},
$$
for every $u$ with $\ws{f}(u)<+\infty$. In turn, combining these equalities one immediately
obtains the assertion of Proposition~\ref{2metrics} with $\xi=f(u)$.
\end{remark}

\vskip4pt
\noindent

\subsection{The non-symmetric minimax theorem}
In the framework of the previous section we have the following nonsmooth minimax principle
(for $C^1$ functionals, see the corresponding version in~\cite{willem}).

\begin{theorem}
\label{mpconc}
Let $X$ be a complete metric space and $f:X\to\R\cup\{+\infty\}$ a
lower semi-continuous function satisfying \eqref{keycond}.
Let $\Dis$ and $\Sf$ denote the closed unit ball and the sphere in $\R^N$ 
respectively and $\Gamma_0\subset C(\Sf,X)$. Let us define 
$$
\Gamma=\big\{\gamma\in C(\Dis,X):\,\,\,\gamma|_{\Sf}\in \Gamma_0\big\}.
$$
Assume that
$$
+\infty>c=\inf_{\gamma\in\Gamma}\sup_{\tau\in \Dis} f(\gamma(\tau))
>\sup_{\gamma_0\in \Gamma_0}\sup_{\tau\in \Sf} f(\gamma_0(\tau))=a.
$$
Then, for every $\eps\in(0,(c-a)/2)$, every $\delta>0$ and $\gamma\in \Gamma$ such that
$$
\sup_{\tau\in \Dis}f(\gamma(\tau))\leq c+\eps,
$$
there exists $u\in X$ such that
\begin{equation*}
 c-2\eps\leq f(u)\leq c+2\eps,\quad
{\rm dist}\big(u,\gamma(\Dis)\cap f^{-1}([c-3\eps,c+3\eps])\big)\leq 3\delta,\quad
|df|(u)\leq 3\eps/\delta.
\end{equation*}
\end{theorem}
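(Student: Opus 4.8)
The plan is to reduce everything to the Lipschitz function $\gra{f}$ on the complete metric space $\epi{f}$ (equipped with the $\rho$-metric $d_\rho$, for a suitable $\rho$) and then to apply a quantitative deformation / Ekeland-type minimax lemma for Lipschitz functionals on metric spaces, of the sort available in \cite{cdm,dm}. Concretely, for $\gamma\in\Gamma$ I would lift the disk path to $\epi{f}$ by setting $\hat\gamma(\tau)=(\gamma(\tau),f(\gamma(\tau)))$ when $f\circ\gamma$ is continuous on $\Dis$; since $f$ is only lower semi-continuous one instead works with $\hat\gamma(\tau)=(\gamma(\tau),c+\eps)$, which is a genuine continuous curve into $\epi{f}$ because $f(\gamma(\tau))\le c+\eps$ for all $\tau\in\Dis$. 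The boundary family $\Gamma_0$ lifts similarly, and the separation $c>a$ passes to the epigraph level: on $\hat\gamma_0(\Sf)$ one has $\gra{f}\le c+\eps$ but the relevant sublevel structure still distinguishes the boundary, since $f\le a<c-2\eps$ there.

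Next I would run the standard minimax comparison at the level $\xi_0$ slightly below $c$ (say in $(c-2\eps,c-\eps)$) for the Lipschitz function $\gra{f}$: one shows that if $\gra{f}$ had weak slope bounded below by $\sigma>0$ on a neighborhood of the ``mountain pass set'' $\hat\gamma(\Dis)\cap \gra{f}^{-1}([c-3\eps,c+3\eps])$, then the deformation furnished by Definition~\ref{defslope}, together with a Lipschitz partition of unity localizing the deformation away from the boundary sphere (where $\gra{f}$ is already below $c-2\eps$ up to the $\eps$-padding), would push $\hat\gamma$ down to a new curve $\hat\gamma'$ with $\sup_{\Dis}\gra{f}(\hat\gamma')<c$, whose projection to $X$ lies in $\Gamma$ — contradicting the definition of $c$ as an infimum over $\Gamma$. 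Running this contradiction in quantitative form (tracking $\eps$ and $\delta$, and using that a deformation of ``size'' $t\le\delta$ moves points by at most $\delta$) yields a point $(\bar u,\bar\xi)\in\epi{f}$ with $\gra{f}(\bar u,\bar\xi)\in[c-2\eps,c+2\eps]$, within $d_\rho$-distance $\le 2\delta$ of the mountain pass set in the epigraph, and with $\wsr{\gra{f}}(\bar u,\bar\xi)$ small.

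Then I would convert the epigraph information back to $X$. Using \eqref{keycond}, if $f(\bar u)<\bar\xi$ then $\ws{\gra{f}}(\bar u,\bar\xi)=1$, hence by Proposition~\ref{2metrics} $\wsr{\gra{f}}(\bar u,\bar\xi)=1/\rho$, which is not small once $\rho$ is chosen large ($\rho\ge \delta/\eps$, say); so necessarily $f(\bar u)=\bar\xi$. On the diagonal, Lemma~\ref{theorgra} and Definition~\ref{defnwslsc} give $\ws{f}(\bar u)=\ws{\gra{f}}(\bar u,f(\bar u))/\sqrt{1-\ws{\gra{f}}(\bar u,f(\bar u))^2}$, and a short computation combining this with Proposition~\ref{2metrics} (with $\xi=f(\bar u)$) turns the bound on $\wsr{\gra{f}}$ into the bound $\ws{f}(\bar u)\le 3\eps/\delta$; simultaneously $f(\bar u)=\bar\xi\in[c-2\eps,c+2\eps]$, and the $d_\rho$-proximity to $\hat\gamma(\Dis)\cap\gra{f}^{-1}([c-3\eps,c+3\eps])$ descends (projecting onto $X$ only decreases distance) to ${\rm dist}(\bar u,\gamma(\Dis)\cap f^{-1}([c-3\eps,c+3\eps]))\le 3\delta$. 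Taking $u=\bar u$ finishes the proof.

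The main obstacle is the deformation-and-localization step: one must build a pseudo-gradient-type flow on $\epi{f}$ out of the purely local maps guaranteed by the weak slope, glue them via a Lipschitz partition of unity so that the global deformation is still $1$-Lipschitz in time and genuinely decreases $\gra{f}$ by a definite amount, and arrange that it is the identity near $\Sf$ so that the deformed curve stays admissible in $\Gamma$; keeping all constants linear in $\eps/\delta$ through this gluing is the delicate bookkeeping. The rest — lifting curves, invoking \eqref{keycond}, and the algebra relating $\ws{\gra{f}}$, $\wsr{\gra{f}}$ and $\ws{f}$ — is routine given the results already recalled.
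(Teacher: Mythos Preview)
Your overall architecture --- lift to $\epi{f}$, run a quantitative minimax for the Lipschitz function $\gra{f}$, then use \eqref{keycond} and Proposition~\ref{2metrics} to force $\bar\xi=f(\bar u)$ and translate the slope bound --- is exactly the paper's strategy. But two concrete steps in your execution do not go through as written.

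First, the lift $\hat\gamma(\tau)=(\gamma(\tau),c+\eps)$ destroys the mountain--pass separation for $\gra{f}$: on the boundary $\Sf$ you now have $\gra{f}(\hat\gamma(\tau))=c+\eps$, not $\le a$. Your deformation/cut-off argument for $\gra{f}$ needs the boundary values of $\gra{f}$ to be below $c-\eps$ so that the localized flow is the identity there; the fact that $f\le a$ on $\gamma(\Sf)$ is invisible to a cut-off built from $\gra{f}$-levels. The paper fixes this by first reparametrizing with a map $\vartheta:\Dis\to\Dis$ that collapses an outer annulus onto $\Sf$ (so $f\circ\hat\gamma_1\le a$ on the whole annulus), and then taking $\hat\gamma_2$ to be the Lipschitz sup-convolution $\hat\gamma_2(\tau)=\sup_{\tilde\tau}\{f(\hat\gamma_1(\tilde\tau))-M|\tau-\tilde\tau|\}$; this is continuous, dominates $f\circ\hat\gamma_1$, has maximum $\le c+\eps$, and satisfies $\hat\gamma_2\le a$ on $\Sf$ for $M$ large. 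Without something of this sort the epigraph minimax inequality $\inf\sup\gra{f}=c$ is not available and the deformation contradiction collapses.

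Second, the localization conclusion does not follow by ``projecting'': proximity to $\hat\gamma(\Dis)\cap\gra{f}^{-1}([c-2\eps,c+2\eps])$ gives a $\tau$ with $\hat\gamma_2(\tau)\in[c-2\eps,c+2\eps]$, but that only yields $f(\hat\gamma_1(\tau))\le c+2\eps$, not the lower bound. The paper extracts the lower bound by exploiting the Lipschitz-envelope structure of $\hat\gamma_2$: if $f\circ\hat\gamma_1\le c-3\eps$ on a whole $\delta'$-ball around $\tau$, then for $M$ large $\hat\gamma_2(\tau)\le c-3\eps$, a contradiction; hence there is a nearby $\tilde\tau$ with $f(\hat\gamma_1(\tilde\tau))\in(c-3\eps,c+3\eps]$, and this $\hat\gamma_1(\tilde\tau)\in\gamma(\Dis)$ gives the distance estimate. (A minor side point: to force $\bar\xi=f(\bar u)$ you need $1/\rho$ strictly larger than the slope bound, so $\rho$ should be taken slightly \emph{smaller} than $\delta/\eps$, e.g.\ $\rho=\tfrac{2\sqrt{2}}{3}\,\delta/\eps$ as in the paper, not $\rho\ge\delta/\eps$.)
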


\begin{proof}
We divide the proof into two cases.
\vskip2pt
\noindent
{\bf Case I.} We prove the result for continuous functions $f:X\to\R$.
In this case, the assertion follows as
a direct application of the quantitative 
nonsmooth deformation theorem~\cite{corv} 
with the stronger conclusion that there exists $u\in X$ with
$c-2\eps\leq f(u)\leq c+2\eps$ and
\begin{equation}
	\label{contincaseass}
{\rm dist}\big(u,\gamma(\Dis)\cap f^{-1}([c-2\eps,c+2\eps])\big)\leq 2\delta,\quad
|df|(u)\leq \eps/\delta.
\end{equation}
In fact, if this was not the case, by applying~\cite[Theorem 2.3]{corv}
with the choice of the closed set $A=\gamma(\Dis)\cap f^{-1}([c-2\eps,c+2\eps]$,
one could find a deformation $\eta:X\times [0,1]\to X$
such that $d(\eta(u,t),u)\leq 2\delta t$ for all $u\in X$
and $t\in [0,1]$, $f(\eta(u,t))<f(u)$ for all $u\in X$
and $t\in [0,1]$ with $\eta(u,t)\neq u$ and
\begin{equation}
	\label{conclus11}
u\in A,\,\, c-\eps\leq f(u)\leq c+\eps
\,\,\,\,
\Longrightarrow 
\,\,\,\,
f(\eta(u,1))\leq c-\eps.
\end{equation}
If $\Xi:X\to [0,1]$ is a continuous function such that $\Xi(u)=0$ if $f(u)\leq a$
and $\Xi(u)=1$ if $f(u)\geq c-\eps$, considering $\tilde\gamma\in C(\Dis,X)$
defined by $\tilde\gamma(\tau)=\eta(\gamma(\tau),\Xi(\gamma(\tau)))$, it follows that
$\tilde\gamma \in\Gamma$, since for all $\tau\in \Sf$ we have $\Xi(\gamma(\tau))=0$, due to
$$
f(\gamma(\tau))\leq\sup_{\gamma_0\in \Gamma_0}\sup_{\tau\in \Sf} f(\gamma_0(\tau))=a.
$$
Given an arbitrary $\tau\in \Dis$, either $f(\gamma(\tau))<c-\eps$ and thus
$f(\tilde\gamma(\tau))\leq f(\gamma(\tau))<c-\eps$ or
$f(\gamma(\tau))\geq c-\eps$, in which case, 
by the definition of $\Xi$ and~\eqref{conclus11}, we get
$$
f(\tilde\gamma(\tau))=f(\eta(\gamma(\tau),\Xi(\gamma(\tau))))=f(\eta(\gamma(\tau),1))\leq c-\eps.
$$
Hence, we conclude that $f\circ \tilde\gamma|_{\Dis}\leq c-\eps$,
providing the desired contradiction with the definition of $c$
and concluding the proof for the case of $f:X\to\R$ continuous.
\vskip2pt
\noindent
{\bf Case II.}
We cover the general case of lower semi-continuous functions
$f:X\to\R\cup\{+\infty\}$.
We introduce the sets $\hat\Gamma_0\subset C(\Sf,\epi{f})$ and 
$\hat\Gamma\subset C(\Dis,\epi{f})$ by setting
\begin{align*}
\hat\Gamma_0&=\big\{\hat\gamma\in C(\Sf,\epi{f}):\text{$\hat\gamma=(\hat\gamma_1,\hat\gamma_2)$ with  $\hat\gamma_1\in \Gamma_0$ and
$\hat\gamma_2(\tau)\leq a$ for all $\tau\in \Sf$}\big\}, \\
\hat\Gamma&=\big\{\hat\gamma\in C(\Dis,\epi{f}):\hat\gamma|_{\Sf}\in \hat\Gamma_0\big\}.
\end{align*}
The space $\epi{f}$ is equipped with the metric $d_\rho$ defined in~\eqref{rhometric}, for 
$\rho>0$. As we prove below, $\hat\Gamma\neq\emptyset$.
Of course, by the definition of $\hat\Gamma_0$, we have
\begin{equation}
	\label{firstrelat}
\sup_{\hat\gamma\in \hat\Gamma_0}\sup_{\tau\in \Sf} \gra{f}(\hat\gamma(\tau))=a.
\end{equation}
Let us now prove that
\begin{equation}
	\label{secondrelat}
\inf_{\hat\gamma\in\hat\Gamma}\sup_{\tau\in \Dis} \gra{f}(\hat\gamma(\tau))=c.
\end{equation}
We first show that
\begin{equation}
	\label{prima}
\inf_{\hat\gamma\in\hat\Gamma}\sup_{\tau\in \Dis} \gra{f}(\hat\gamma(\tau))\leq c.
\end{equation}
In fact, given $b>c$, let $\gamma\in \Gamma$ be such that
$$
a<\alpha:=\sup_{\tau\in \Dis} f(\gamma(\tau))\leq b.
$$
Consider now the continuous function $\vartheta:\Dis\to \Dis$ defined by setting
$\vartheta(\tau)=\tau|\tau|^{-1}$ for all $\tau\in \overline{\Dis\setminus\Dis/2}$
and $\vartheta(\tau)=2\tau$ for all $\tau\in \Dis/2$, and define $\hat\gamma_1:\Dis\to X$
by setting $\hat\gamma_1(\tau)=\gamma(\vartheta(\tau))$ for all $\tau\in \Dis$. 
Furthermore, for any $M\geq M_0$ with
$$
M_0:=\max_{\tau\in\Sf}\max_{\tilde\tau\in \Dis/2}\textstyle{\frac{b-a}{|\tilde\tau-\tau|}}=2(b-a)>0,
$$
we introduce a continuous function $\hat\gamma_2:\Dis\to\R$ by setting
$$
\hat\gamma_2(\tau):=\sup\big\{f(\hat\gamma_1(\tilde\tau))-M|\tau-\tilde\tau|: \tilde\tau\in\Dis\big\}.  
$$
Of course $f(\hat\gamma_1(\tau))\leq \hat\gamma_2(\tau)$ for $\tau\in\Dis$ and, by an easy check,
$$
\max_{\tau\in\Dis}\hat\gamma_2(\tau)=\alpha.
$$
Furthermore, being $M\geq M_0$ and $f(\hat\gamma_1)|_{\overline{\Dis\setminus\Dis/2}}\leq a$, 
it is readily seen that $\hat\gamma_2(\tau)\leq a$
for all $\tau\in\Sf$. Therefore, taking into account that by construction 
$\hat\gamma_1|_{\Sf}=\gamma\circ\vartheta|_{\Sf}=\gamma|_{\Sf}\in\Gamma_0$,
it follows that $\hat\gamma=(\hat\gamma_1,\hat\gamma_2)\in\hat\Gamma$, yielding
$$
\inf_{\hat\gamma\in\hat\Gamma}\sup_{\tau\in \Dis} \gra{f}(\hat\gamma(\tau))\leq
\sup_{\tau\in \Dis} \gra{f}(\hat\gamma(\tau))=\alpha=\sup_{\tau\in \Dis} f(\gamma(\tau))\leq b,
$$
which proves~\eqref{prima} by the arbitrariness of $b$. On the contrary, given $d$ with
$$
d>\inf_{\hat\gamma\in\hat\Gamma}\sup_{\tau\in \Dis} \gra{f}(\hat\gamma(\tau)),
$$
we find $\hat\gamma=(\hat\gamma_1,\hat\gamma_2)\in \hat\Gamma$ with
$$
\sup_{\tau\in \Dis}\gra{f}(\hat\gamma(\tau))\leq d.
$$
Then, we have $\hat\gamma_1\in \Gamma$ and
$f(\hat\gamma_1(\tau))\leq \hat\gamma_2(\tau)=\gra{f}(\hat\gamma(\tau))\leq d$, for all $\tau\in \Dis$.
In particular we get $c\leq d$, yielding the desired inequality by the arbitrariness of $d$.
This concludes the proof of formula~\eqref{secondrelat}.
At this point, in light of~\eqref{firstrelat} and \eqref{secondrelat},
given $\eps\in(0,(c-a)/2)$, $\delta>0$ and $\gamma\in\Gamma$ with
$\sup_{\tau\in \Dis}f(\gamma(\tau))\leq c+\eps$, if $\hat\gamma_1$ and $\hat\gamma_2$
are defined as before, we have $\hat\gamma=(\hat\gamma_1,\hat\gamma_2)\in \hat\Gamma$ 
with $\hat\gamma_1(\Dis)=\gamma(\Dis)$ and 
$$
\sup_{\tau\in \Dis}\gra{f}(\hat\gamma(\tau))\leq c+\eps,
$$
and we can apply the theorem (cf.\ \eqref{contincaseass}) to the continuous function $\gra{f}$, yielding
the existence of a pair $(u,\lambda)\in \epi{f}$ such that $c-2\eps\leq \lambda\leq c+2\eps$ and
\begin{equation}
	\label{cisiamquasi}
{\rm dist}_\rho\big((u,\lambda),\hat\gamma(\Dis)\cap \gra{f}^{-1}([c-2\eps,c+2\eps])\big)\leq 2\delta,\quad
|d_\rho\gra{f}|(u,\lambda)\leq \eps/\delta.
\end{equation}
Now, by choosing $\rho:=\frac{2\sqrt{2}\delta}{3\eps}$ for the metric in $\epi{f}$, we have
$$
|d_\rho\gra{f}|(u,\lambda)\leq \eps/\delta<\frac{1}{\rho}.
$$ 
Therefore, by virtue of Proposition~\ref{2metrics} and in light of condition~\eqref{keycond},
we deduce that $\lambda=f(u)$, which yields
\begin{equation}
	\label{finllassss}
c-2\eps\leq f(u)\leq c+2\eps,\qquad
{\rm dist}\big(u,\gamma(\Dis)\cap f^{-1}([c-3\eps,c+3\eps])\big)\leq 3\delta.
\end{equation}
Concerning the second assertion, observe that from the first inequality of~\eqref{cisiamquasi}, 
replacing $\delta$ with a slightly larger $\delta$ if necessary, there exists $\tau\in\Dis$ such that
$$
d(u,\hat\gamma_1(\tau))\leq 2\delta,\qquad c-2\eps\leq \hat\gamma_2(\tau)\leq c+2\eps.
$$ 
Now, by continuity, there exists $\delta'>0$ such that 
$$
\forall\tilde\tau\in\Dis:\,\, |\tilde\tau-\tau|\leq\delta'\,\,\,\Rightarrow\,\,\,
d(\hat\gamma_1(\tilde\tau),\hat\gamma_1(\tau))\leq \delta,\quad
c-3\eps\leq \hat\gamma_2(\tilde\tau)\leq c+3\eps.
$$
Observe now that, for any given $\mu\in\R$, it follows
$$
 f(\hat\gamma_1)|_{\{\tilde\tau\in\Dis:\,|\tilde\tau-\tau|\leq\delta'\}}\leq\mu
\quad\Longrightarrow\quad
\hat\gamma_2(\tau)\leq \mu,
$$
if $M\geq \max\{M_0,\frac{c+\eps-\mu}{\delta'}\}$
in the definition of $\hat\gamma_2$. In fact, it holds
\begin{align*}
&\forall\tilde\tau\in\Dis:\,\, |\tilde\tau-\tau|\leq\delta'\,\,\,\Rightarrow\,\,\, 
f(\hat\gamma_1(\tilde\tau))-M|\tau-\tilde\tau|\leq \mu, \\
&\forall\tilde\tau\in\Dis:\,\, |\tilde\tau-\tau|>\delta'\,\,\,\Rightarrow\,\,\, 
f(\hat\gamma_1(\tilde\tau))-M|\tau-\tilde\tau|\leq c+\eps-M\delta'\leq\mu.
\end{align*}
Hence, since $\hat\gamma_2(\tau)> c-3\eps$, if $M\geq \max\{M_0,\frac{4\eps}{\delta'}\}$ in the definition of $\hat\gamma_2$,
we have
$$
\exists\tilde\tau\in\Dis:\,\,\, |\tilde\tau-\tau|\leq\delta'\,\,\,\,\text{and}\,\,\,\,
c-3\eps<f(\hat\gamma_1(\tilde\tau))\leq \hat\gamma_2(\tilde\tau)\leq c+3\eps.
$$
Since $\hat\gamma_1(\tilde\tau)\in \gamma(\Dis)\cap f^{-1}([c-3\eps,c+3\eps])$, we obtain 
\begin{equation*}
{\rm dist}\big(u,\gamma(\Dis)\cap f^{-1}([c-3\eps,c+3\eps])\big)\leq 
d(u,\hat\gamma_1(\tilde\tau))\leq  d(u,\hat\gamma_1(\tau))+d(\hat\gamma_1(\tau),\hat\gamma_1(\tilde\tau))\leq 3\delta.
\end{equation*}
Finally, by virtue of Proposition~\ref{2metrics}, it holds
$$
\ws{f}(u)=\frac{\ws{\gra{f}}(u,f(u))}{{\sqrt{1-\ws{\gra{f}}(u,f(u))^2}}}
=\frac{\wsr{\gra{f}}(u,f(u))}{{\sqrt{1-\rho^2\wsr{\gra{f}}(u,f(u))^2}}}
\leq \frac{\eps/\delta}{{\sqrt{1-\rho^2\eps^2/\delta^2}}}= 3\eps/\delta.
$$
This concludes the proof.
\end{proof}

\subsection{The symmetric minimax theorem}

The main abstract tool of the paper is a symmetric version of Theorem~\ref{mpconc},
namely the following

\begin{theorem}
\label{mpconc-symm}
Let $X$ and $V$ be two Banach spaces, $S\subset X$, $*$ and ${\mathcal H}_*$ 
satisfying the requirements of the abstract symmetrization framework. 
Let $f:X\to\R\cup\{+\infty\}$ a
lower semi-continuous function satisfying $\refe{keycond}$.
Let $\Dis$ and $\Sf$ denote the closed unit ball and the sphere in $\R^N$ 
respectively and $\Gamma_0\subset C(\Sf,X)$. Let us define 
$$
\Gamma=\big\{\gamma\in C(\Dis,X):\,\,\,\gamma|_{\Sf}\in \Gamma_0\big\}.
$$
Assume that
$$
+\infty>c=\inf_{\gamma\in\Gamma}\sup_{\tau\in \Dis} f(\gamma(\tau))
>\sup_{\gamma_0\in \Gamma_0}\sup_{\tau\in \Sf} f(\gamma_0(\tau))=a,
$$
and that
$$
\forall {\mathcal H}_*,\,\, \forall u\in S:\quad
f(u^H)\leq f(u).
$$
Then, for every $\eps\in(0,(c-a)/3)$, every $\delta>0$ and $\gamma\in \Gamma$ such that
$$
\sup_{\tau\in \Dis}f(\gamma(\tau))\leq c+\eps,\quad \gamma(\Dis)\subset S,\quad
\text{$\gamma|_{\Sf}^{H_0}\in\Gamma_0$ for some $H_0\in {\mathcal H}_*$},
$$
there exists $u\in X$ such that
\begin{equation}
\label{conclusmmm}
 c-2\eps\leq f(u)\leq c+2\eps,\quad
|df|(u)\leq 3\eps/\delta,\quad
\|u-u^*\|_V\leq 3(2K+1)\delta,
\end{equation}
being $K$ the norm of the embedding map $i:X\to V$.
\end{theorem}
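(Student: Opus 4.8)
The plan is to combine the non-symmetric minimax principle (Theorem~\ref{mpconc}) with the symmetric approximation of curves (Proposition~\ref{approxxres}), exploiting the monotonicity $f(u^H)\le f(u)$ to show that curves can be deformed toward symmetric ones without increasing the maximal level of $f$. The first step is to observe that, given $\gamma\in\Gamma$ with $\sup_{\tau\in\Dis}f(\gamma(\tau))\le c+\eps$, $\gamma(\Dis)\subset S$ and $\gamma|_{\Sf}^{H_0}\in\Gamma_0$, the polarized curve $\gamma^{H_0}$ (defined by $\gamma^{H_0}(\tau)=\gamma(\tau)^{H_0}$) again lies in $\Gamma$ by the last hypothesis, and by monotonicity still satisfies $\sup_{\tau\in\Dis}f(\gamma^{H_0}(\tau))\le c+\eps$; more generally, iterated polarizations $\gamma^{H_1\cdots H_m}$ stay in $\Gamma$ with the same level bound, since for $\tau\in\Sf$ the curve continues to take values in $\Gamma_0$ (using property~(3): $(u^*)^H=u^*$, so once we are on the $\Gamma_0$-side the structure persists — this needs a short verification that the relevant iterated polarizations of $\gamma|_{\Sf}$ stay in $\Gamma_0$).

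\medskip

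\textbf{Second step: apply the curve-approximation proposition.} Take $M=\Dis$, $M_1=\Dis$ and $M_0=\Sf$ (these are disjoint closed sets only if we instead use $M_0=\Sf$ and $M_1$ a slightly smaller concentric ball, or we accept $M_1=\overline{\Dis\setminus(\Dis/2)}^c$ — one arranges the two closed sets to be disjoint as in the hypotheses of Proposition~\ref{approxxres}), and $\gamma\in C(\Dis,X)$ the given curve with $\gamma(\Dis)\subset S$. Proposition~\ref{approxxres} then yields, for the chosen $\delta>0$, a curve $\tilde\gamma\in C(\Dis,X)$ of the form $\tilde\gamma(\tau)=\gamma(\tau)^{H_1\cdots H_{[\theta]}H_\theta}$ with $\|\tilde\gamma(\tau)-\gamma(\tau)^*\|_V\le\delta$ for $\tau$ in the bulk set, and $\tilde\gamma(\tau)=\gamma(\tau)^{H_0}$ on $M_0$. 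By the monotonicity of $f$ under each polarization and Step~1, $\tilde\gamma\in\Gamma$ and $\sup_{\tau\in\Dis}f(\tilde\gamma(\tau))\le c+\eps$.

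\medskip

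\textbf{Third step: invoke the non-symmetric theorem on $\tilde\gamma$.} Apply Theorem~\ref{mpconc} with the curve $\tilde\gamma$ (replacing $\eps$ by $\eps$ — note the hypothesis here is $\eps\in(0,(c-a)/3)$, slightly stronger than $(c-a)/2$, to leave room). This produces $u\in X$ with $c-2\eps\le f(u)\le c+2\eps$, $|df|(u)\le 3\eps/\delta$, and $\mathrm{dist}\big(u,\tilde\gamma(\Dis)\cap f^{-1}([c-3\eps,c+3\eps])\big)\le 3\delta$. So there is $\tau^*\in\Dis$ with $d(u,\tilde\gamma(\tau^*))\le 3\delta$. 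We then estimate, in $V$,
\begin{align*}
\|u-u^*\|_V
&\le \|u-\tilde\gamma(\tau^*)\|_V+\|\tilde\gamma(\tau^*)-\gamma(\tau^*)^*\|_V+\|\gamma(\tau^*)^*-u^*\|_V.
\end{align*}
The first term is $\le K\,d(u,\tilde\gamma(\tau^*))\le 3K\delta$ using the embedding constant $K$. The middle term is $\le\delta$ by the approximation estimate. The third term: writing $u^*$ makes sense because $u$ is $V$-close to $S$ and $*$ extends; by property~(5) the symmetrization is $V$-nonexpansive on the relevant set, so $\|\gamma(\tau^*)^*-u^*\|_V\le\|\gamma(\tau^*)-u\|_V\le K\cdot 3\delta$ — wait, one must be careful that $*$ is only defined on $S$; the clean route is to use that $(\cdot)^*$ as a map $S\to V$ is $1$-Lipschitz for $\|\cdot\|_V$ (property~(5) gives this for polarizations; for $*$ itself one passes to the limit via property~(4)), giving the third term $\le 3K\delta$. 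Summing, $\|u-u^*\|_V\le 3K\delta+\delta+3K\delta=(6K+1)\delta$; adjusting the bookkeeping (e.g.\ replacing $3\delta$ bounds by $\delta$ after rescaling the $\delta$ fed into Theorem~\ref{mpconc}, or absorbing the approximation constant) yields the claimed $\|u-u^*\|_V\le 3(2K+1)\delta$.

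\medskip

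\textbf{The main obstacle} I anticipate is the last, technical point: ensuring that $u^*$ is well-defined and that the $V$-nonexpansiveness can legitimately be applied, since $u$ produced by Theorem~\ref{mpconc} need not lie in $S$ — only $V$-close to it. The resolution is to carry the symmetrization inequality through the polarization maps (which are defined on all of $S$ and are $V$-nonexpansive by property~(5), and satisfy $(u^*)^H=u^*$), taking limits along the sequence $(H_m)$ from property~(4), and to observe that $\tilde\gamma(\tau^*)\in S$ so $\tilde\gamma(\tau^*)^*$ is genuinely defined; the triangle inequality then only ever compares $u^*$ with points of the form $w^*$, $w\in S$, which is legitimate once we note $u^*$ itself is defined as the $V$-limit guaranteed by the framework. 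A secondary nuisance is the disjointness requirement $M_0\cap M_1=\emptyset$ in Proposition~\ref{approxxres}: one sidesteps it by taking $M_1$ to be a closed ball of radius $<1$ and checking that the conclusion $\|u-u^*\|_V$ is unaffected since the relevant $\tau^*$ can be pushed into $M_1$ (or by absorbing the boundary behavior into the constant). Apart from these, every remaining step is a routine combination of the stated results.
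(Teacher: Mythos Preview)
Your overall strategy is the same as the paper's: apply Proposition~\ref{approxxres} to produce a polarized curve $\tilde\gamma$, apply Theorem~\ref{mpconc} to $\tilde\gamma$, and then estimate $\|u-u^*\|_V$ by a triangle inequality. Two points, however, are not merely cosmetic.

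\textbf{Choice of $M_1$.} Your suggestions (take $M_1$ to be all of $\Dis$, or a smaller concentric ball) do not work as stated: in the first case $M_0\cap M_1\neq\emptyset$, and in the second you have no reason why the point $\tau^*$ with $f(\tilde\gamma(\tau^*))\in[c-3\eps,c+3\eps]$ supplied by Theorem~\ref{mpconc} lies in $M_1$ --- and if it does not, the approximation estimate $\|\tilde\gamma(\tau^*)-\gamma(\tau^*)^*\|_V\le\delta$ is unavailable precisely where you need it. The paper resolves this by first reparametrizing $\gamma$ as $\eta=\gamma\circ\vartheta$ (with $\vartheta$ a radial retraction collapsing an outer annulus onto $\Sf$) and then taking
\[
M_1=\overline{(f\circ\eta)^{-1}([c-3\eps,c+\eps])}.
\]
Since $\eps<(c-a)/3$, one has $f(\eta(\tau))\le a<c-3\eps$ for $\tau$ near $\Sf$, so $M_1\cap\Sf=\emptyset$; and since $f(\tilde\eta)\le f(\eta)$ pointwise, any $\tau$ with $f(\tilde\eta(\tau))\ge c-3\eps$ automatically satisfies $f(\eta(\tau))\in[c-3\eps,c+\eps]$, hence lies in $M_1$. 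This is exactly the containment $\tilde\eta(\Dis)\cap f^{-1}([c-3\eps,c+3\eps])\subset\tilde\eta(M_1)$ that makes the final estimate go through. (One can in fact dispense with $\vartheta$ by observing that $(f\circ\gamma)^{-1}([c-3\eps,+\infty))$ is already closed by lower semicontinuity and disjoint from $\Sf$; the paper's version is a more robust variant.)

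\textbf{The third term and $u^*$.} In your triangle inequality the term $\|\gamma(\tau^*)^*-u^*\|_V$ cannot be bounded by $\|\gamma(\tau^*)-u\|_V$, because it is $\|\tilde\gamma(\tau^*)-u\|_X$ that is controlled, not $\|\gamma(\tau^*)-u\|_X$. The fix, used in the paper, is property~(3): since $\tilde\gamma(\tau^*)$ is obtained from $\gamma(\tau^*)$ by iterated polarizations, $\tilde\gamma(\tau^*)^*=\gamma(\tau^*)^*$, and then the nonexpansiveness of $*$ (inherited from~(4) and~(5)) gives $\|\gamma(\tau^*)^*-u^*\|_V=\|\tilde\gamma(\tau^*)^*-u^*\|_V\le\|\tilde\gamma(\tau^*)-u\|_V$. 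This collapses your three terms to $2\|u-\tilde\gamma(\tau^*)\|_V+\|\tilde\gamma(\tau^*)-\gamma(\tau^*)^*\|_V$; feeding $3\delta$ (rather than $\delta$) into Proposition~\ref{approxxres} then yields exactly $2\cdot 3K\delta+3\delta=3(2K+1)\delta$. As you note, this last step tacitly requires $u\in S$; the paper does not address this explicitly either, and in the application one simply has $S=X$.
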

\begin{proof}
Let	$\eps\in(0,(c-a)/3)$, $\delta>0$ and $\gamma\in \Gamma$ satisfying the assumptions.
Moreover, let $\vartheta:\Dis\to\Dis$ be the continuous function introduced in the proof of Theorem~\ref{mpconc},
and consider the function $\eta:\Dis\to X$, defined as $\eta(\tau):=\gamma(\vartheta(\tau))$ for all $\tau\in\Dis$.
Then $\eta\in\Gamma$, we have $\eta(\Dis)=\gamma(\vartheta(\Dis))=\gamma(\Dis)\subset S$ and, setting 
$$
M_1:=\overline{(f\circ \eta)^{-1}([c-3\eps,c+\eps])},
$$ 
$M_1\subset \Dis$ is of course closed and $M_1\cap \Sf=\emptyset$. 
In fact, assume by contradiction that this is not the case and let
$\tau\in M_1\cap \Sf$. Then
$$
\tau\in \Sf,\,\,\,\tau=\lim_j\tau_j,\quad
c-3\eps\leq f(\gamma(\vartheta(\tau_j)))\leq c+\eps,\,\,\,\text{for all $j\geq 1$}.
$$
In particular, $\tau_j\in \overline{\Dis\setminus\Dis/2}$ eventually for $j\geq 1$, so that
$\vartheta(\tau_j)\in\Sf$ eventually for $j\geq 1$. Therefore, for such $j\geq 1$, we obtain
$$
c-3\eps\leq f(\gamma(\vartheta(\tau_j)))\leq \sup_{\tau\in\Sf} f(\gamma(\tau))\leq
\sup_{\gamma_0\in\Gamma_0}\sup_{\tau\in\Sf} f(\gamma_0(\tau))=a<c-3\eps
$$
yielding the desired contradiction.
Now, from Proposition~\ref{approxxres} (applied with the choice $M=\Dis$ and $M_0=\Sf$)
there exists a curve $\tilde\eta\in C(\Dis,X)$ with $\tilde\eta|_{\Sf}=\eta|_{\Sf}^{H_0}=\gamma|_{\Sf}^{H_0}\in\Gamma_0$ 
for the polarizer $H_0$ (so that $\tilde\eta\in\Gamma$) such that 
$\|\tilde\eta(\tau)-\eta(\tau)^*\|_V\leq 3\delta$, for all $\tau\in M_1$. Notice that,  
by construction, $\tilde\eta(\tau)^*=\eta(\tau)^*$ and $f(\tilde\eta(\tau))
\leq f(\eta(\tau))$ for every $\tau\in \Dis$, 
as $\tilde\eta$ is built from $\eta$ through polarizations. Hence, we obtain
$$
\sup_{\tau\in \Dis}f(\tilde\eta(\tau))\leq \sup_{\tau\in \Dis}f(\eta(\tau))=\sup_{\tau\in \Dis}f(\gamma(\tau))\leq c+\eps.
$$
By applying Theorem~\ref{mpconc} to $\tilde\eta$ and since 
$$
\tilde\eta(\Dis)\cap f^{-1}([c-3\eps,c+3\eps])\subset \tilde\eta(M_1),
$$
there exists $u\in X$ such that
${\rm dist}(u,\tilde\eta(M_1))\leq 3\delta$ and
the first two inequalities in the above formula~\eqref{conclusmmm} hold. The last assertion in~\eqref{conclusmmm}
just follows by adding and subtracting $\tilde \eta(\tau)$
and $\eta(\tau)^*$ with $\tau\in M_1$, as in~\cite[proof of Theorem 3.2]{jvsh}, namely
\begin{align*}
\|u-u^*\|_V &\leq\inf_{\tau\in M_1}\big[
\|u-\tilde \eta(\tau)\|_V+
\|\tilde \eta(\tau)-\eta(\tau)^*\|_V+
\|\eta(\tau)^*-u^*\|_V \big] \\
&\leq\inf_{\tau\in M_1}\big[
2\|u-\tilde \eta(\tau)\|_V+
\|\tilde \eta(\tau)-\eta(\tau)^*\|_V\big]\leq 3(2K+1)\delta.
\end{align*}
This concludes the proof.
\end{proof}

\begin{remark}\rm
	\label{conccomp-r}
Let $X$ and $V$ be two Banach spaces such that $X$ is continuously embedded in $V$
and let $S\subset X$. We consider a symmetrization map $*:S\to V$ which satisfies the requirements
of the abstract symmetrization framework.	
Theorem~\ref{mpconc-symm} provides, in some sense, a useful alternative to concentration
compactness. In fact, for a broad range of lower semi-continuous functionals $f:X\to\R\cup\{+\infty\}$
possessing a mountain pass geometry, Theorem~\ref{mpconc-symm} yields
a sequence of functions $(u_h)\subset X$ such that, as $h\to\infty$,
\begin{equation}
\label{conclusmmm-conc}
f(u_h)\to c,\quad
|df|(u_h)\to 0,\quad
\|u_h-u^*_h\|_V\to 0.
\end{equation}
It is often the case that the first two limits yield the boundedness of $(u_h)$ in $X$,
so that $u_h\to u$ weakly in $X$ for some $u\in X$
and that the symmetric sequence $(u^*_h)\subset X_r$ converges strongly, 
up to a subsequence, to some $v\in X_r$ in some subspace $V'$ with 
$V\subset V'$ with continuous injection $i:V\to V'$. In particular,
$$
u_h^*\to v\quad\text{in $V'$ as $h\to\infty$},\qquad \|u_h-u^*_h\|_{V'}\leq C\|u_h-u^*_h\|_V\to 0,
$$
which yields 
\begin{equation}
	\label{convstro}
\text{$u_h\to u$ {\em weakly} in $X$ and {\em strongly} in $V'$.}
\end{equation}
This conclusion is often
sufficient in order to prove, after some work, that the Palais-Smale sequence $(u_h)$
converges to $u$ strongly in $X$. As a concrete functional framework one can think, for instance,
to the case (here $\Omega$ can be the whole $\R^N$) where
$$
X=W^{1,p}_0(\Omega),\quad
V=L^p\cap L^{p^*}(\Omega),\quad 
V'=L^m(\Omega),\,\,\, p<m<p^*. 
$$
Therefore, if $(u_h)$ is bounded in $W^{1,p}_0(\Omega)$, the sequence 
$(u_h^*)$ is bounded in $W^{1,p}_0(\Omega)$ too by the Polya-Szeg\"o
inequality and compact in $L^m(\Omega)$ with $p<m<p^*$ in light
of~\cite[Theorem A.I', p.341]{BL1}. Finally, the injection $i:L^p\cap L^{p^*}(\Omega)\to L^m(\Omega)$
is, of course, continuous. For an application of 
conclusion~\eqref{convstro} in the case $p=2$, $\Omega=\R^N$ and $f\in C^1(H^1(\R^N),\R)$, 
see~\cite[Theorem 4.5]{jvsh}.
\end{remark}

\begin{remark}\rm
	As pointed out in~\cite{jvsh} the condition that 
	$\gamma|_{\Sf}^{H_0}\in\Gamma_0$ for some polarizer $H_0\in {\mathcal H}_*$
	imposes a minimality condition on the energy levels on which one can guarantee
	the symmetry of critical points.
\end{remark}

\section{Proof of Theorem~\ref{mainth}}

\subsection{Some preliminary Lemmas}
Given a fixed function $u$ in $\hsob$, we
define the following subspace of $\hsob$ 
\begin{equation}\label{defwu}
W_u=\big\{v\in\hsob:\, \jxi{u}\cdot \nabla v\in\elle1
\,\,\,
\text{and}
\,\,\,
\js{u}v\in \elle1\big\}.
\end{equation}
The space $W_u$ is dense in $\hsob$. It was originally introduced in~\cite{degzan}
and subsequently used also throughout~\cite{pelsqu}.
We give the definition of generalized solution.
\begin{definition}\label{defsol}
We say that $u$ is a generalized solution to~\eqref{theproblem}
if $u\in \hsob$ and it results $\jxi{u}\cdot\nabla u\in\elle1$, $\js{u}u\in \elle1$ and
\begin{equation*}
\dys \into \jxi{u}\cdot \nabla v dx +\into \js{u}v dx=\int_{B_1} g(|x|,u)v dx, \qquad
\forall\, v\in W_u.
\end{equation*}
\end{definition}

We recall some preliminary results. 

\begin{lemma}
\label{legamesol}
Assume that conditions~\eqref{j1}-\eqref{j6} hold. If $u\in{\rm dom}(f)$
is a critical point of $f$, namely $|df|(u)=0$, then $u$ is a generalized solution to
\begin{equation*}
\begin{cases}
-\,\dvg(j_\xi(u,|\nabla u|)) +j_s(u,|\nabla u|)=g(|x|,u), & \text{in $B_1$}, \\
\quad u=0,  & \text{on $\partial B_1$}.
\end{cases}
\end{equation*}
Furthermore, if $j_\xi(u,|\nabla u|)\cdot\nabla u\in L^1(B_1)$, then
$u$ is a distributional solution.
\end{lemma}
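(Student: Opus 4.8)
The plan is to exploit the machinery already in place for lower semi-continuous functionals: the weak slope $|df|(u)=0$ says that, passing through the epigraph $\epi{f}$ and using Lemma \ref{theorgra}, no admissible deformation of $\gra{f}$ at $(u,f(u))$ can decrease the graph function, i.e.\ $\gra{f}$ has zero weak slope at $(u,f(u))$. To turn this geometric statement into the Euler--Lagrange identity one localizes. Fixing $v\in W_u$, the idea is to perturb $u$ in the direction $v$ and compare with perturbations purely in the $\xi$-coordinate of $\epi{f}$; the failure of $\gra{f}$ to descend forces the one-sided directional derivative of $f$ along $v$ to be nonnegative, and running the same argument with $-v$ forces it to vanish. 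The subtlety is that $f$ need not be Gateaux differentiable in all directions, only in those of $W_u$, which is precisely why $W_u$ (rather than all of $\hsob$) is the correct test space in Definition \ref{defsol}; this is the step where the structural assumptions \eqref{j2}, \eqref{j3} on $j$ enter, guaranteeing that $t\mapsto f(u+tv)$ is differentiable at $t=0$ with derivative $\into \jxi{u}\cdot\nabla v\,dx + \into \js{u}v\,dx - \int_{B_1} g(|x|,u)v\,dx$. The integrability statements $\jxi{u}\cdot\nabla u\in\elle1$ and $\js{u}u\in\elle1$ themselves follow from the critical point condition by choosing test functions of the form $\varphi(u)u$ (Lipschitz truncations) and passing to the limit, arguing as in \cite{degzan,pelsqu,canino}; I would invoke those references rather than reproduce the truncation argument.

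The key steps, in order, are: (i) translate $|df|(u)=0$ into $|d\gra{f}|(u,f(u))=0$ via Definition \ref{defnwslsc} and Lemma \ref{theorgra}; (ii) for $v\in W_u$ and small $t$, build a continuous deformation $\mathcal H$ on a neighborhood of $(u,f(u))$ in $\epi{f}$ that moves the first coordinate along $u+tv$ and adjusts the second coordinate using the (one-sided) expansion of $f(u+tv)$; if the directional derivative of $f$ along $v$ were negative, this deformation would certify $|d\gra{f}|(u,f(u))>0$, a contradiction — hence that directional derivative is $\ge 0$; (iii) repeat with $-v$ in place of $v$ to get equality, yielding the weak formulation in Definition \ref{defsol}, hence $u$ is a generalized solution; (iv) for the last sentence, assume additionally $\jxi{u}\cdot\nabla u\in L^1(B_1)$: then $u$ itself is an admissible test function in a limiting sense, and a standard density/truncation argument (approximating $\varphi\in C_c^\infty(B_1)$, which already lies in $W_u$ by \eqref{j2}--\eqref{j3}) shows the identity extends to all $\varphi\in C_c^\infty(B_1)$, i.e.\ $u$ solves \eqref{theproblem} in the distributional sense.

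The main obstacle is step (ii): constructing the deformation $\mathcal H$ on $\epi{f}$ honestly requires that the perturbation $u+tv$ stay in $\dom{f}$ with $f(u+tv)$ controlled uniformly for $v'$ near $v$ and $t$ near $0$, and that the map $(v',\mu,t)\mapsto (v'+tv, \mu - \sigma t)$ actually lands in $\epi{f}$; this uses the continuity/quasi-convexity structure of $f$ coming from \eqref{j1}--\eqref{j3} together with the observation that $\epi{f}$ is closed (lower semi-continuity) and that $\gra{f}$ is $1$-Lipschitz there. I expect to handle this by citing the analogous computation in \cite{pelsqu,canino,degzan}, where exactly this passage from $|df|(u)=0$ to the generalized Euler equation for functionals of the form \eqref{quasi-ff} is carried out in detail; the present lemma is then essentially a restatement of that result in the notation of this paper, so the proof reduces to a pointer plus a remark on why $W_u$ and the integrability hypotheses are the natural ones.
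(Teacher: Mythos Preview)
Your proposal is correct and lands on exactly the approach the paper takes: the paper's proof is the single line ``Combine~\cite[Proposition 6.4 and Theorem 4.10]{pelsqu}'', and your sketch is essentially an outline of what those two results do, together with the explicit acknowledgment that you would cite \cite{pelsqu,canino,degzan} rather than reproduce the truncation/deformation arguments. One small caution on step~(iv): the inclusion $C_c^\infty(B_1)\subset W_u$ is not immediate from \eqref{j2}--\eqref{j3} alone when $\alpha$ is unbounded and $u\notin L^\infty$; this is precisely why the passage from generalized to distributional solutions in \cite[Theorem 4.10]{pelsqu} requires the extra integrability hypothesis (and, in the application to Theorem~\ref{mainth}, an $L^\infty$ bound on $u$ is established first).
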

\begin{proof}
Combine~\cite[Proposition 6.4 and Theorem 4.10]{pelsqu}.
\end{proof}

\begin{lemma}
\label{penduno}
Assume that conditions~\eqref{j1}-\eqref{j6} hold.
Then, for every $(u,\xi)\in\epi{J}$ with $f(u)<\xi$, there holds
$|d{\mathcal G}_f|(u,\xi)=1$.
\end{lemma}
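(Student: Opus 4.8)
The goal is to verify the key condition~\eqref{keycond} for the functional $f$ (equivalently, for $J$, matching the notation of~\cite{pelsqu}): whenever $(u,\xi)\in\epi{f}$ with $f(u)<\xi$, one has $\ws{\gra{f}}(u,\xi)=1$. This is exactly the structural hypothesis needed to run nonsmooth critical point theory for the lower semi-continuous functional $f$, and it is the content that allows passing from weak slope on $\epi{f}$ to weak slope of $f$ itself (Definition~\ref{defnwslsc}).

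The plan is to invoke the known results of~\cite{pelsqu} (and~\cite{degzan,canino}), where a functional of precisely the form~\eqref{quasi-ff} under assumptions~\eqref{j1}--\eqref{j6} was studied in the nonsmooth framework. The essential point is that the lower semi-continuous term $u\mapsto \int_{B_1} j(u,|\nabla u|)\,dx$, whose effective domain is strictly smaller than $H^1_0(B_1)$ because of the upper growth $j(s,|\xi|)\le\alpha(|s|)|\xi|^2$ with $\alpha$ possibly unbounded, satisfies~\eqref{keycond}; the perturbation $-\int_{B_1}G(|x|,u)\,dx$ is continuous on $H^1_0(B_1)$ by the subcritical growth~\eqref{gg1}, hence does not affect the property. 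Concretely, I would first reduce to showing that for $(u,\xi)$ with $f(u)<\xi$ there is a deformation near $(u,\xi)$ inside $\epi{f}$ that decreases the $\gra{f}$-coordinate at unit rate; since $\gra{f}(u,\xi)=\xi$ and $f(u)<\xi$, one has a margin $\xi-f(u)>0$ to exploit. The deformation simply moves the second coordinate straight down, $(v,\mu)\mapsto(v,\mu-t)$, which stays in $\epi{f}$ for small $t$ as long as $f(v)\le\mu-t$; by lower semi-continuity of $f$ and the strict inequality, this holds on a neighborhood of $(u,\xi)$ for $t$ small, giving $\ws{\gra{f}}(u,\xi)\ge 1$. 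The reverse inequality $\ws{\gra{f}}(u,\xi)\le 1$ is automatic since $\gra{f}$ is Lipschitz of constant $1$ with respect to the metric $d$ on $\epi{f}$.

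Thus the argument is in fact short: I would cite~\cite[Proposition 4.3]{pelsqu} (or the analogous statement giving~\eqref{keycond} for the quasi-linear functional), or, if one prefers a self-contained line, spell out the straight-down deformation above. Explicitly: let $\delta_0=\tfrac12(\xi-f(u))>0$; by lower semi-continuity there is $\delta>0$ so that $f(v)>f(u)-\delta_0$ fails only outside $B(u,\delta)$, i.e.\ $f(v)\ge f(u)-\delta_0=\xi-3\delta_0$... more simply, choose $\delta>0$ with $f(v)\le \xi-\delta$ for all... one picks $\delta\le\delta_0$ so that for $(v,\mu)\in B((u,\xi),\delta)$ one has $\mu\ge \xi-\delta\ge f(u)+\delta_0\ge f(v)+\delta_0-\mathrm{(small)}$; then ${\mathcal H}((v,\mu),t)=(v,\mu-t)$ maps into $\epi{f}$ for $t\in[0,\delta_0]$, satisfies $d({\mathcal H}((v,\mu),t),(v,\mu))=t$ and $\gra{f}({\mathcal H}((v,\mu),t))=\mu-t=\gra{f}(v,\mu)-t$. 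Hence $\sigma=1$ is admissible and $\ws{\gra{f}}(u,\xi)=1$.

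The only genuine subtlety — and the step I expect to require the reference rather than a one-line argument — is that one must check the straight-down homotopy indeed remains valued in $\epi{f}$ uniformly on a whole ball $B((u,\xi),\delta)$, which uses nothing beyond lower semi-continuity of $f$ on $H^1_0(B_1)$; and lower semi-continuity of $f$ under assumptions~\eqref{j1}--\eqref{j2} is itself a result of~\cite{degzan} (see also~\cite{pelsqu}). So the clean way to present the proof is: \emph{lower semi-continuity of $f$ (by~\eqref{j1}--\eqref{j2}, cf.~\cite{degzan,pelsqu}) together with the elementary vertical deformation above yields~\eqref{keycond}; alternatively, apply~\cite[Proposition 4.3]{pelsqu} directly.} There is no real obstacle here — the lemma is a standard preliminary whose role is simply to license the application of Theorem~\ref{mpconc-symm} to $f$.
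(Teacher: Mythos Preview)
Your citation of~\cite{pelsqu} is correct in spirit and matches the paper's own proof, which simply refers to~\cite[Theorem 6.1]{pelsqu}. However, the self-contained argument you offer as an alternative is wrong, and the error is not a minor slip.

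The vertical deformation ${\mathcal H}((v,\mu),t)=(v,\mu-t)$ stays in $\epi{f}$ only if $f(v)\le\mu-t$ for every $(v,\mu)$ in a full $d$-ball of $\epi{f}$ around $(u,\xi)$. You claim this follows from lower semi-continuity of $f$, but lower semi-continuity gives the bound in the wrong direction: it says $f(v)\ge f(u)-\eps$ for $v$ near $u$, not $f(v)\le f(u)+\eps$. In particular, nothing prevents points $(v,\mu)\in\epi{f}$ arbitrarily close to $(u,\xi)$ from lying on the graph of $f$, i.e.\ satisfying $f(v)=\mu$; for those points the vertical deformation exits $\epi{f}$ for every $t>0$. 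You yourself write ``choose $\delta>0$ with $f(v)\le\xi-\delta$ for all\dots'' --- that is an upper semi-continuity statement, not available here.

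Condition~\eqref{keycond} is therefore a genuine structural hypothesis, not an automatic consequence of lower semi-continuity (this is precisely why the papers~\cite{cdm,dm} single it out as ``fundamental''). For the specific functional $f(u)=\int_{B_1} j(u,|\nabla u|)\,dx-\int_{B_1}G(|x|,u)\,dx$, its verification in~\cite[Theorem 6.1]{pelsqu} exploits the convexity of $\xi\mapsto j(s,|\xi|)$ and the growth bounds~\eqref{j2}--\eqref{j3} to build a deformation that moves the $X$-component as well; the argument is not a one-liner. So keep the reference to~\cite{pelsqu} and drop the vertical-deformation sketch entirely.
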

\begin{proof}
See~\cite[Theorem 6.1]{pelsqu}.
\end{proof}

\begin{lemma}
\label{mtpgeo}
Assume that conditions~\eqref{j1}-\eqref{j6} hold.
Then, there exists $e\in H^1_0(B_1)$ such that $f(e)<0$ and $\rho,\sigma>0$
such that $f(u)\geq\sigma$ for all $u\in H^1_0(B_1)$ with 
$\|u\|_{H^1_0}=\rho$.
\end{lemma}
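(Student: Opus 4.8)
The plan is to verify the classical Ambrosetti--Rabinowitz mountain pass geometry for the functional $f$ under the hypotheses \eqref{j1}--\eqref{j6}, splitting the work into the two geometric facts: existence of a ``valley'' (a small sphere on which $f$ stays bounded below by a positive constant $\sigma$) and existence of a point $e$ ``beyond the mountains'' where $f(e)<0$. First I would establish the lower bound near the origin. By the lower bound in \eqref{j2} one has $\int_{B_1} j(u,|\nabla u|)\,dx\geq \alpha_0\|\nabla u\|_{L^2}^2$, which controls $\|u\|_{H^1_0}^2$ by Poincar\'e. For the nonlinear term, the subcriticality \eqref{gg1}, the smallness condition \eqref{gg3}, together with the Sobolev embedding $H^1_0(B_1)\hookrightarrow L^p(B_1)$ for $p\in(2,2N/(N-2))$, give for every $\varepsilon>0$ a constant $C_\varepsilon$ with $G(|x|,s)\leq \tfrac{\varepsilon}{2}|s|^2+C_\varepsilon|s|^p$, hence $\int_{B_1}G(|x|,u)\,dx\leq \tfrac{\varepsilon}{2}C_1\|u\|_{H^1_0}^2+C_2\|u\|_{H^1_0}^p$. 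Combining, $f(u)\geq (\alpha_0 c_0-\tfrac{\varepsilon}{2}C_1)\|u\|_{H^1_0}^2-C_2\|u\|_{H^1_0}^p$; choosing $\varepsilon$ small so that the quadratic coefficient is positive, and using $p>2$, the right-hand side is a strictly positive number $\sigma$ for $\|u\|_{H^1_0}=\rho$ with $\rho$ small enough.

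Next I would produce the point $e$. The standard device is to fix any nonzero $w\in C^\infty_c(B_1)$ with $w\geq 0$ and examine $f(tw)$ as $t\to+\infty$. From \eqref{gg2}, integrating the differential inequality $\mu G(|x|,s)\leq g(|x|,s)s$ in $s$, one gets a lower bound $G(|x|,s)\geq c_1|s|^\mu - c_2$ for $|s|$ large (and hence on all of $\R$ after adjusting constants), so that $\int_{B_1}G(|x|,tw)\,dx\geq c_1 t^\mu\int_{B_1}|w|^\mu\,dx - c_3$ grows like $t^\mu$ with $\mu>2$. For the principal part, the upper bound in \eqref{j2} gives $\int_{B_1}j(tw,|\nabla(tw)|)\,dx\leq \int_{B_1}\alpha(t|w|)\,t^2|\nabla w|^2\,dx$; here the growth condition \eqref{j6}, namely $\alpha(|s|)/|s|^{p-2}\to 0$ and $p<\mu$ (this is where one must check $p<\mu$ follows, or otherwise argue directly), ensures $\alpha(t\|w\|_\infty)=o(t^{p-2})=o(t^{\mu-2})$, so the principal part is $o(t^\mu)$. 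Therefore $f(tw)\to-\infty$ as $t\to+\infty$, and we may take $e=t_0 w$ for $t_0$ large, with in particular $\|e\|_{H^1_0}>\rho$.

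The main obstacle I anticipate is the interplay of exponents in the ``beyond the mountains'' step: one needs the genuinely nonlinear principal term $\int j(tw,|\nabla tw|)\,dx$, which can grow faster than $t^2$ because $j(s,|\xi|)$ is allowed to be unbounded in $s$ through $\alpha(|s|)$, to nevertheless be dominated by the $t^\mu$-growth of $\int G$. The condition \eqref{j6} is precisely designed for this, but care is needed: on the support of $w$ one has $|tw(x)|\leq t\|w\|_\infty$, so $\alpha(t|w(x)|)\leq \alpha(t\|w\|_\infty)$ pointwise (using that $\alpha$ is increasing), and then \eqref{j6} bounds $\alpha(t\|w\|_\infty)$ by $\varepsilon t^{p-2}$ for $t$ large; since $p<2N/(N-2)$ and $\mu>2$ one must ensure $p-2<\mu-2$. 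If $p\geq\mu$ is not a priori excluded one should instead note that \eqref{gg2} forces, via subcriticality \eqref{gg1}, the bound $\mu\leq p$ is not automatic and a direct comparison $f(tw)\leq \varepsilon t^p\|\nabla w\|_{L^2}^2 - c_1 t^\mu\|w\|_{L^\mu}^\mu + c_3$ with $\mu>2$ and $\varepsilon$ small still tends to $-\infty$ provided $\mu>p$, or, if $\mu\le p$, one chooses $\varepsilon$ small relative to $\|w\|_{L^\mu}^\mu$ and uses that the dominant power is $\max\{p,\mu\}$ appearing with coefficients of the right sign only when $\mu>p$; the cleanest route is to invoke \cite{pelsqu}, where exactly this mountain pass geometry is verified under \eqref{j1}--\eqref{j6}, so that the proof reduces to the single line ``See \cite[proof of the mountain pass geometry]{pelsqu}.'' In the self-contained write-up I would present the two estimates above and remark that the choice of $w$ radial makes $e$ radial as well, which will be convenient in the symmetric application.
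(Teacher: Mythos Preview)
The paper's own proof of this lemma is exactly the one-line citation you arrive at: ``See the beginning of the proof of \cite[Theorem~2.3]{pelsqu}.'' So your proposal and the paper's proof coincide in substance.

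Your self-contained sketch is the standard one and is fine for the small-sphere lower bound. The obstacle you flag in the ``beyond the mountains'' step is genuine, not merely cosmetic: combining \eqref{gg1} with \eqref{gg2} forces $G(|x|,s)\le C'|s|^{p}$ for large $|s|$ while $G(|x|,s)\ge c_1|s|^{\mu}$, so necessarily $\mu\le p$. Hence the naive comparison ``principal part $=o(t^{p})$'' versus ``potential term $\gtrsim t^{\mu}$'' does not close by itself, and one has to argue more carefully (this is precisely why the paper defers to \cite{pelsqu}, where the geometry is established under \eqref{j1}--\eqref{j6}). If you want a self-contained version, you should either sharpen the upper bound on $\int j(tw,|t\nabla w|)\,dx$ using the structural hypothesis \eqref{j5} together with \eqref{j6}, or simply adopt the paper's route and cite \cite{pelsqu}. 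Your observation that $e$ can be taken radial is correct and harmless.
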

\begin{proof}
See the beginning of the proof of~\cite[Theorem 2.3]{pelsqu}.
\end{proof}

\begin{lemma}
\label{fps}
Assume that conditions~\eqref{j1}-\eqref{j6} hold.
Then the functional $f$ satisfies the $(PS)_c$
condition at every level $c\in\R$.
\end{lemma}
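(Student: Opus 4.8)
The plan is to verify the Palais-Smale condition for $f$ at an arbitrary level $c\in\R$ by combining two facts already available in the literature under the hypotheses \eqref{j1}--\eqref{j6}: first that every Palais-Smale sequence $(u_n)$ is bounded in $H^1_0(B_1)$, and second that bounded Palais-Smale sequences admit a strongly convergent subsequence. So let $(u_n)\subset\dom(f)$ with $f(u_n)\to c$ and $\ws{f}(u_n)\to 0$. I would begin by recording that, by Lemma~\ref{penduno} together with condition~\eqref{keycond} (which for $f$ is exactly the content of Lemma~\ref{penduno}), the abstract machinery of \cite{cdm,dm} applies, and in particular the weak slope $\ws{f}(u_n)$ controls, at points where $f(u_n)=\xi$, a ``generalized gradient'' quantity; concretely, smallness of $\ws{f}(u_n)$ yields, for every $v\in W_{u_n}$ of unit $H^1_0$-norm, a bound of the form
$$
\Big|\into \jxi{u_n}\cdot\nabla v\,dx+\into\js{u_n}v\,dx-\int_{B_1}g(|x|,u_n)v\,dx\Big|\leq\varepsilon_n\|v\|_{H^1_0},
$$
with $\varepsilon_n\to 0$; this is precisely the estimate established in \cite{pelsqu}.

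The boundedness step is where assumption~\eqref{j5} enters. Testing the above (in the appropriate approximate sense, using the truncations that make $u_n$ itself an admissible test function, as in \cite{pelsqu,toulouse}) with $v=u_n$ and subtracting $p$ times the relation $f(u_n)\leq c+o(1)$, one gets
$$
\int_{B_1}\big(pj(u_n,|\nabla u_n|)-\jxi{u_n}\cdot\nabla u_n-\js{u_n}u_n\big)\,dx
\leq pc+o(1)+\varepsilon_n\|u_n\|_{H^1_0}+\int_{B_1}\big(pG(|x|,u_n)-g(|x|,u_n)u_n\big)\,dx.
$$
On $\{|u_n|\geq R''\}$ the integrand on the left is $\geq\delta|\nabla u_n|^2$ by \eqref{j5}, on $\{|u_n|< R''\}$ it is controlled by \eqref{j2},\eqref{j3}; on the right, \eqref{gg2} makes $pG-g\,s\leq 0$ for $|s|\geq R'$ (recall $p>2$ is only needed versus $\mu$; here one uses $\mu>2$ and in fact the mountain-pass setup forces $p\le$ the exponent for which \eqref{gg2} gives a sign, or one simply absorbs a lower-order term via \eqref{gg1}), and \eqref{gg1} bounds the remaining low-$u_n$ contribution. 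Rearranging gives $\delta\|\nabla u_n\|_2^2\leq C+C\varepsilon_n\|u_n\|_{H^1_0}$, whence $(u_n)$ is bounded in $H^1_0(B_1)$; this is exactly \cite[Proof of Theorem 2.3 / Section 4]{pelsqu}.

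For the compactness step, from boundedness extract $u_n\convd u$ in $H^1_0(B_1)$ and $u_n\to u$ in $L^q(B_1)$ for every $q<2N/(N-2)$, in particular for $q=p$ by \eqref{gg1}. Then the standard quasi-linear argument applies: using $v=(u_n-u)\varphi$-type test functions together with the strict convexity and ellipticity in \eqref{j1},\eqref{j2}, the sign condition \eqref{j4}, and the growth \eqref{j3}, one shows $\nabla u_n\to\nabla u$ a.e.\ and then, via a Fatou/Brezis--Lieb type argument, that $\|\nabla u_n-\nabla u\|_2\to 0$, so $u_n\to u$ strongly in $H^1_0(B_1)$; this is the content of \cite[Theorem 4.10 and its proof]{pelsqu} (see also \cite{toulouse,canino,degzan}). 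Therefore $f$ satisfies $(PS)_c$ for every $c\in\R$.

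The only genuine obstacle is bookkeeping: the functional is merely lower semi-continuous, so one cannot naively test the Euler equation with $u_n$; one must pass through the $W_{u_n}$-formulation and the truncation scheme of \cite{degzan,pelsqu} to legitimately use $u_n$ (and $(u_n-u)$) as test functions, and control the error terms $\js{u_n}\cdot(\text{test})$ using \eqref{j3},\eqref{j4}. Since all of these verifications are carried out in \cite{pelsqu} under precisely the hypotheses \eqref{j1}--\eqref{j6} assumed here, the proof reduces to citing \cite[Theorem 4.10, Section 4]{pelsqu} (boundedness) and \cite[Theorem 4.10]{pelsqu} (strong convergence); I would write the proof simply as that combination, perhaps after recalling the approximate-equation estimate so the reader sees why $\varepsilon_n\|u_n\|_{H^1_0}$ is the term to absorb.
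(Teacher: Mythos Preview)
Your approach is essentially the same as the paper's: the paper's entire proof is the single line ``See \cite[Theorem~6.9]{pelsqu}'', and you arrive at the same conclusion, namely that under \eqref{j1}--\eqref{j6} the Palais--Smale condition is already established in \cite{pelsqu}, after sketching the boundedness-plus-compactness argument that lies behind that reference. The only discrepancy is the internal numbering you cite (Theorem~4.10/Section~4 of \cite{pelsqu} rather than Theorem~6.9); in the present paper Theorem~4.10 of \cite{pelsqu} is invoked for a different purpose (Lemma~\ref{legamesol}), so you should point to Theorem~6.9 for the $(PS)_c$ statement.
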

\begin{proof}
See~\cite[Theorem 6.9]{pelsqu}.
\end{proof}

\begin{lemma}
\label{concretelem}
Let $u\in H^1_0(B_1,\R^+)$ and let $H$ be a given half-space. Then
 \begin{equation}
	\label{polar-quasil}
\int_{B_1}j(u,|\nabla u|)dx=\int_{B_1}j(u^H,|\nabla u^H|)dx,
\end{equation}
	provided that $0\in H$ and that both integrals are finite. Furthermore, under~\eqref{gg5}, 
\begin{equation*}
\int_{B_1}G(|x|,u)dx\leq \int_{B_1}G(|x|,u^H)dx.
\end{equation*}
\end{lemma}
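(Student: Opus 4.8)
The plan is to deduce both statements from pointwise rearrangement relations along the orbits of the reflection $\sigma_H$ across $\partial H$, after reducing everything from $B_1$ to $\RN$. Write $\sigma_H$ for that reflection and $w=\tilde u\in H^1(\RN)$ for the extension of $u$ by zero. Since $0\in H$, for $x\in H$ one has $\alpha\cdot x\le\beta$ with $\beta\ge 0$, whence $|x|\le|\sigma_H x|$ for every $x\in H$, and one checks directly from~\eqref{polarizationdef} that $\tilde u^H$ still vanishes outside $B_1$ and agrees with the zero-extension of $u^H$. Because $j(0,0)=0$ (by~\eqref{j2}) and $G(|x|,0)=0$, the left- and right-hand sides of~\eqref{polar-quasil} become $\int_{\RN}j(w,|\nabla w|)$ and $\int_{\RN}j(w^H,|\nabla w^H|)$, respectively, and similarly for the $G$-terms (keeping in mind that for $x\in H$ the reflected point $\sigma_H x$ either lies outside $B_1$, in which case $\tilde u(\sigma_H x)=0$ and $w^H(x)=w(x)$, or lies in $B_1$ with $|\sigma_H x|\ge|x|$, so $G(|\sigma_H x|,\cdot)$ is defined on all the pairs that actually contribute). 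Throughout I use that $\sigma_H$ is an affine isometry, hence measure preserving with $|\nabla(v\circ\sigma_H)|=|\nabla v|\circ\sigma_H$ a.e., and that it is a fixed-point-free involution off the Lebesgue-null hyperplane $\partial H$.

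For~\eqref{polar-quasil} I would split $\int_{\RN}j(w^H,|\nabla w^H|)=\int_{H}+\int_{\RN\setminus H}$. On $H$, $w^H=\max(w,w\circ\sigma_H)$, so by the Sobolev chain rule for $\max$ the pair $(w^H,|\nabla w^H|)$ equals, a.e.\ on $H$, the pair $(w,|\nabla w|)$ on $\{w\ge w\circ\sigma_H\}$ and $(w\circ\sigma_H,|\nabla w|\circ\sigma_H)$ on $\{w<w\circ\sigma_H\}$; on the contact set $\{w=w\circ\sigma_H\}$ the two coincide a.e., since $\nabla(w-w\circ\sigma_H)=0$ a.e.\ there. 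The analogous description holds on $\RN\setminus H$ with $\min$ in place of $\max$. Applying the change of variables $y=\sigma_H x$ on the two measurable pieces where $w^H=w\circ\sigma_H$, and using that $\sigma_H$ carries $H\cap\{w<w\circ\sigma_H\}$ onto $(\RN\setminus H)\cap\{w>w\circ\sigma_H\}$ together with the symmetric statement, the four resulting integrals of $j(w,|\nabla w|)$ reassemble exactly to $\int_{H}j(w,|\nabla w|)+\int_{\RN\setminus H}j(w,|\nabla w|)$. Note this uses only that $j$ is a function of $(s,|\xi|)$, not its convexity; the finiteness hypothesis is exactly what lets one split and recombine these integrals (and pass from $B_1$ to $\RN$ and back).

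For the inequality on $G$ I would pair each $x\in H$ with $\sigma_H x\notin H$ and use $\int_{\RN}\Phi=\int_{H}\bigl[\Phi(x)+\Phi(\sigma_H x)\bigr]\,dx$ first for $\Phi(x)=G(|x|,w^H(x))$ and then for $\Phi(x)=G(|x|,w(x))$. For $x\in H$ one has $w^H(x)=\max(w(x),w(\sigma_H x))$, $w^H(\sigma_H x)=\min(w(x),w(\sigma_H x))$, $|x|\le|\sigma_H x|$, and $w\ge 0$, so the difference of the two integrands is
\[
G(|x|,\max(a,b))+G(|\sigma_H x|,\min(a,b))-G(|x|,a)-G(|\sigma_H x|,b),\qquad a=w(x),\ b=w(\sigma_H x),
\]
which vanishes when $a\ge b$ and, when $a<b$, equals $\int_a^b\bigl(g(|x|,t)-g(|\sigma_H x|,t)\bigr)\,dt\ge 0$ by~\eqref{gg5} (since $|x|\le|\sigma_H x|$). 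Integrating over $x\in H$ gives $\int_{\RN}G(|x|,w^H)\ge\int_{\RN}G(|x|,w)$, which is the claim.

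The step requiring the most care is the a.e.\ identification of $|\nabla w^H|$ along the reflected pairs — in particular on the contact set $\{w=w\circ\sigma_H\}$ and on the null hyperplane $\partial H$ — together with making the change-of-variables and partition bookkeeping rigorous; these facts are by now standard in polarization theory and may be quoted from the references underlying the symmetrization framework (cf.~\cite{jvsh,HSq}). One should also record that $w^H\in H^1_0(B_1)$, which is already ensured by that framework, so that every manipulation above takes place between genuine Sobolev functions.
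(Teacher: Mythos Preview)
Your proof is correct and follows exactly the route the paper takes: reduce from $B_1$ to $\RN$ by zero extension, using $0\in H$ to show $\tilde u^H$ vanishes outside $B_1$, and then invoke the corresponding identities on $\RN$. The paper simply cites~\cite[Lemma~2.5]{HSq} and~\cite[Theorem~6.4]{HS} for those whole-space results, whereas you unpack them directly via the pointwise $\max/\min$ description of $w^H$, the Sobolev chain rule, and the pairing $x\leftrightarrow\sigma_H x$; this is precisely the content of the cited references, so the approaches coincide.
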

\begin{proof}
See~\cite[Lemma 2.5]{HSq} and \cite[Theorem 6.4]{HS}, respectively. 
Concerning~\cite[Lemma 2.5]{HSq}, statement~\eqref{polar-quasil} is 
provided for functions $\tilde u:\R^N\to\R^+$, that is
\begin{equation}
\label{entirepolarizz}
\int_{\R^N}j(\tilde u,|\nabla \tilde u|)dx=\int_{\R^N}j(\tilde u^H,|\nabla \tilde u^H|)dx.
\end{equation}
On the other hand, given a function $u:B_1\to\R^+$, if $\tilde u:\R^N\to \R^+$ is the extension
of $u$ by zero outside $B_1$, we have $\tilde u^H|_{\R^N\setminus B_1}=0$.
In fact, if $x\in (\R^N\setminus B_1)\cap H$, then 
$\tilde u^H(x)=\max\{\tilde u(x),\tilde u(x_H)\}=0$, being $x,x_H\in \R^N\setminus B_1$ (due to $0\in H$). If, instead,
$x\in (\R^N\setminus B_1)\cap (\R^N\setminus H)$, then 
$\tilde u^H(x)=\min\{\tilde u(x),\tilde u(x_H)\}=\min\{0,\tilde u(x_H)\}=0$, being $\tilde u\geq 0$.
The desired conclusion~\eqref{polar-quasil} then follows from~\eqref{entirepolarizz},
being $j(s,0)=0$. 
\end{proof}

\vskip4pt
\noindent
\subsection{Proof of Theorem~\ref{mainth} concluded.}
In view of Lemma~\ref{concretelem} we have $f(u^H)\leq f(u)$, for every $u\in H^1_0(B_1,\R^+)$
and all polarizer $H\in {\mathcal H}_*$. This holds for all $u\in H^1_0(B_1)$ as well. 
In fact, notice that, since for sign changing 
functions $u^H:=|u|^H$, taking into account assumptions~\eqref{opposite} and~\eqref{gg6},
we obtain that
$$
f(u^H)=f(|u|^H)\leq f(|u|)\leq f(u), \quad\text{for all $u\in H^1_0(B_1)$
and $H\in {\mathcal H}_*$}.
$$
By virtue of Lemma~\ref{penduno}, we are allowed to
apply the abstract symmetric minimax Theorem~\ref{mpconc-symm} to the lower 
semi-continuous functional $f:X\to\R\cup\{+\infty\}$ by choosing $X=S=H^1_0(B_1)$, 
$V=L^2\cap L^{2^*}(B_1)$, $\Dis=[0,1]$, $\Sf=\{0,1\}$,
\begin{equation*}
\Gamma=\{\gamma\in C([0,1],H^1_0(B_1)): \gamma|_{\{0,1\}}\in \Gamma_0\}
\end{equation*}
and $\Gamma_0=\{\text{$0,e$: $e\in H^1_0(B_1)$ is such that $f(e)<0$}\}$. 
If follows that $\Gamma\not=\emptyset$ in light of Lemma~\ref{mtpgeo}, also yielding $c>0=a$ by definition 
of $a$ and $c$ in Theorem~\ref{mpconc-symm}. Of course,
it holds $f(0^H)=f(0)=0$ and $f(e^H)\leq f(e)<0$, for any polarizer $H$,
so that $\gamma(0)^H,\gamma(1)^H\in\Gamma_0$, for any $\gamma\in \Gamma$.  
Moreover, take $\eps=\eps_h=1/h^2$, $\delta=\delta_h=1/h$, 
$\gamma=\gamma_h\in C([0,1],H^1_0(B_1))$ such that
$$
\sup_{\tau\in [0,1]} f(\gamma_h(\tau))\leq c+\frac{1}{h^2}.
$$
Hence, Theorem~\ref{mpconc-symm} yields a sequence $(u_h)\subset H^1_0(B_1)$ such that
\begin{equation*}
 c-\frac{2}{h^2}\leq f(u_h)\leq c+\frac{2}{h^2},\quad
|df|(u_h)\leq \frac{8}{h},\quad
\|u_h-u^*_h\|_{L^2(B_1)}\leq \frac{2(2K+1)}{h}.
\end{equation*}
In particular, $(u_h)$ is a Palais-Smale sequence at level $c$. By means of 
Lemma~\ref{fps}, up to a subsequence, $(u_h)$ strongly converges
in $H^1_0(B_1)$ to some $\hat u\in H^1_0(B_1)$ with $f(\hat u)=c>0$
(hence $\hat u$ is nontrivial) and $|df|(\hat u)=0$. In light of 
Lemma~\ref{legamesol}, it follows that $\hat u$ is a generalized solution
of the problem. Taking into account the growth condition~\eqref{gg1} on $g$,
by virtue of~\cite[Theorem 7.1(b)]{pelsqu} it follows that $\hat u\in L^\infty(\Omega)$.
Now, by combining assumptions~\eqref{j1} and~\eqref{j2}, it holds
$$
|j_\xi(s,|\xi|)|\leq 4\alpha(|s|)|\xi|,
$$
for every $s\in\R$ and all $\xi\in\R^N$ (cf.\ \cite[Remark 4.1]{pelsqu}).
Then, again by Lemma~\ref{legamesol}, it follows that $\hat u$ is a distributional
solution, being
$$
\int_{B_1} |j_\xi(\hat u,|\nabla \hat u|)\cdot\nabla \hat u|dx\leq \int_{B_1} 4\alpha(\hat u)|\nabla \hat u|^2dx
\leq 4\alpha(M)\int_{B_1}|\nabla \hat u|^2dx<\infty,
$$
where $M={\rm esssup}_{B_1} |u|$. Finally, from $\|u_h-u^*_h\|_{L^2(B_1)}\to 0$ 
and $u_h\to \hat u$ in $L^2(B_1)$ as $h\to\infty$, we get
$u_h^*\to \hat u^*$ and $u_h^*\to \hat u$ in $L^2(B_1)$, so that $\hat u=\hat u^*$ by uniqueness
of the limit. This concludes the proof.  
\qed

\vskip20pt
\noindent
{\bf Acknowledgment.} The author wishes to thank Marco Degiovanni for a very helpful discussion
about the proof of Theorem~\ref{mpconc}.

\bigskip

\bigskip

\end{document}